\documentclass[12pt]{article}
\usepackage[margin=1.25in]{geometry}
\usepackage[utf8]{inputenc}

\usepackage[english]{babel}
\usepackage{natbib}
\bibliographystyle{abbrvnat}
\setcitestyle{authoryear,open={(},close={)}} 

\usepackage[dvipsnames]{xcolor}

\usepackage{amsmath, amssymb, amsthm,mathtools,dsfont}
\usepackage{bbm}
\usepackage{natbib}
\usepackage{blkarray}
\usepackage{cancel}
\usepackage{enumitem}
\usepackage{euscript}
\usepackage{float}
\usepackage{bm}
\usepackage{graphicx}
\usepackage{mathrsfs}
\usepackage{microtype}
\usepackage{ragged2e}
\usepackage{sectsty}
\usepackage{thmtools}
\usepackage{tikz}
\usepackage[Symbolsmallscale]{upgreek}

\usepackage{microtype}
\usepackage{graphicx}
\usepackage{subfigure}
\usepackage{hyperref}

\usepackage[normalem]{ulem}

\hypersetup{citecolor=purple, colorlinks=true, linkcolor=blue}

\newcommand{\cH}{\mathcal{H}}

\newcommand{\cP}{\mathcal{P}}

\newcommand{\cT}{\mathcal{T}}

\newcommand{\cX}{\mathcal{X}}
\newcommand{\cY}{\mathcal{Y}}

\newcommand{\RR}{\mathbb{R}}


\newcommand{\kl}[2]{\text{KL}(#1 \| #2)}
\newcommand*{\tv}[2]{\mathrm{d_{TV}}(#1, #2)}

\newcommand*{\triplenorm}[1]{{\left\vert\kern-0.25ex\left\vert\kern-0.25ex\left\vert #1
    \right\vert\kern-0.25ex\right\vert\kern-0.25ex\right\vert}}



\DeclareMathOperator{\dom}{dom}

\newcommand{\R}{\mathbb{R}}
\newcommand{\Rd}{\mathbb{R}^d}
\renewcommand{\phi}{\varphi}
\newcommand{\eps}{\varepsilon}

\newcommand*{\E}{\mathbb E}

\newcommand*{\ep}{\varepsilon}
\newcommand*{\defeq}{\coloneqq}
\newcommand*{\rd}{\mathrm{d}}
\newcommand*{\dd}{\, \rd}

\newcommand{\OTep}{W_{2,\eps}^2}

\newcommand{\p}[1]{\left(#1 \right)}

\newcommand{\Hphi}{H_{\max}(\phi_\eps^\nu)}
\newcommand{\Hpsi}{H_{\max}(\psi_\eps^\nu)}
\usepackage{fancyhdr}

\usepackage[capitalize,noabbrev]{cleveref}

\theoremstyle{plain}
\newtheorem{theorem}{Theorem}[section]

\newtheorem{prop}[theorem]{Proposition}
\newtheorem{lemma}[theorem]{Lemma}
\newtheorem{corollary}[theorem]{Corollary}
\theoremstyle{definition}

\theoremstyle{remark}
\newtheorem{remark}[theorem]{Remark}

\title{Tight stability bounds for entropic Brenier maps}
\author{Vincent Divol\thanks{CEREMADE, Université Paris-Dauphine - PSL. \tt vincent.divol@psl.eu} \ \ Jonathan Niles-Weed\thanks{Courant Institute of Mathematical Sciences and Center for Data Science, New York University. \tt jnw@cims.nyu.edu} \ \  Aram-Alexandre Pooladian\thanks{Center for Data Science, New York University. \tt aram-alexandre.pooladian@nyu.edu}}
\begin{document}

\maketitle

\begin{abstract}
Entropic Brenier maps are regularized analogues of Brenier maps (optimal transport maps) which converge to Brenier maps as the regularization parameter shrinks. In this work, we prove quantitative stability bounds between entropic Brenier maps under variations of the target measure. In particular, when all measures have bounded support, we establish the optimal Lipschitz constant for the mapping from probability measures to entropic Brenier maps. This provides an exponential improvement to a result of Carlier, Chizat, and Laborde (2024). As an application, we prove near-optimal bounds for the stability of semi-discrete \emph{unregularized} Brenier maps for a family of discrete target measures.
\end{abstract}

\section{Introduction}\label{sec: intro}
The theory of optimal transport defines a geometry over probability measures via the \emph{$2$-Wasserstein distance}: for a source measure $\rho$ and a target measure $\mu$ with finite second moments, their Wasserstein distance is given by 
\begin{align}\label{eq:w22_intro}
     W_2^2(\rho,\mu) \defeq \min_{T  :  T_\sharp \rho = \mu} \int \|x - T(x)\|^2 \dd \rho(x)\,,
\end{align}
where the constraint $T_\sharp \rho = \mu$ means that for $X \sim \rho$, $T(X) \sim \mu$, i.e., $T$ is a transport map. The minimizer to \eqref{eq:w22_intro}, when it exists, is called an optimal transport map, which we denote by $T_0^\mu$. A seminal result by \citet{Bre91} states that a unique optimal transport map between $\rho$ and $\mu$ exists whenever $\rho$ has a density, and moreover $T_0^\mu = \nabla \phi_0^\mu$, where $\phi_0^\mu$ is some convex function. We will henceforth refer to optimal transport maps as \emph{Brenier maps}, and the corresponding convex functions that generate them as \emph{Brenier potentials}.

A long-standing question in the optimal transport community is the following: is the mapping $\mu\mapsto T_0^\mu$ H{\"o}lder continuous with respect to the $2$-Wasserstein distance? In other words, do there exist constants $C,\beta > 0$ such that for all probability measures  $\mu$, $\nu$ with finite second moments,
\begin{align}\label{eq:stability_brenier}
    \|T_0^\mu - T_0^\nu\|_{L^2(\rho)} \leq C W^\beta_2(\mu,\nu)\,?
\end{align}
Since the inequality $W_2(\mu,\nu) \leq \|T_0^\mu - T_0^\nu\|_{L^2(\rho)}$ always holds, \eqref{eq:stability_brenier} would imply that the mapping $\mu\mapsto T_0^\mu$ is a bi-H{\"o}lder embedding of the Wasserstein space into $L^2(\rho)$.
We call such an inequality a \emph{stability bound}.

The unique structure of the one-dimensional optimal transport problem shows that when $\rho$, $\mu$, and $\nu$ are probability  measures on $\RR$, the bound \eqref{eq:stability_brenier} holds with $C = \beta=1$---that is, the mapping $\mu \mapsto T_0^\mu$ is an isometry~\citep[see, e.g.,][Chapter 2]{Panaretos2020}.
On the other hand, \citet{andoni2015snowflake} showed that if $d \geq 3$, then \eqref{eq:stability_brenier} cannot hold uniformly over all probability measures $\mu$ and $\nu$ on $\RR^d$ with finite second moment.
In fact, their main statement is significantly stronger and rules out the possibility of embedding the Wasserstein space into any $L^p$ space, even in a very weak sense.
Nevertheless, as we describe further below, a stability bound such as~\eqref{eq:stability_brenier} can hold if further conditions are imposed on $\mu$ and $\nu$, for instance, if they are compactly supported.

An early investigation in this direction is due to \citet{gigli2011holder}, who showed that even when $\mu$ and $\nu$ are compactly supported, the exponent in~\eqref{eq:stability_brenier} cannot be better than $\beta=\tfrac12$. However, in the same paper, the author reports a simple proof due to Ambrosio that shows that if one of the Brenier maps, say $T_0^\nu$, is $\Lambda$-Lipschitz, then $\beta=\tfrac12$ is achievable, with $C = 2\sqrt{\Lambda R}$, where $R$ is the diameter of the support of $\rho$; see also \citet[Theorem 2.3]{merigot2020quantitative} for a precise statement and proof of this result.
More recently, \cite{manole2021plugin} showed that if $T_0^\nu$ is $\Lambda$-Lipschitz and its inverse is $1/\lambda$-Lipschitz, then $\beta=1$ is achievable, with constant $C = \sqrt{\Lambda/\lambda}$.

Though these positive results are encouraging, requiring \emph{a priori} smoothness bounds on one of the two Brenier potentials excludes many cases of practical interest, for instance, the case of discontinuous Brenier maps.
Such maps arise commonly in applications of optimal transport to machine learning, where it is natural to consider probability measures that lie on a union of manifolds of different intrinsic dimension~\citep{brown2022union}.
There has therefore been significant recent interest in obtaining stability bounds without such assumptions; see \cite{merigot2020quantitative,berman2021convergence,delalande2021quantitative}. The results of \cite{delalande2021quantitative} are the most recent. They show that if $\rho$ has a (uniformly upper and lower bounded) density supported on a convex set $\cX$, with $\mu$ and $\nu$ also supported on a compact set $\cY$, then
\begin{align*}
    \| T_0^\mu - T_0^\nu\|_{L^2(\rho)} \leq C_{d,\cX, \cY,\rho} W^{1/6}_2(\mu,\nu)\,.
\end{align*}
In fact, the authors prove this bound for the $W_1$ distance. Their proof technique relies on applications of the Brascamp--Lieb and Prekopa--Leindler inequalities.

The goal of this paper is to prove analogous stability bounds for \emph{entropic Brenier maps}.
Entropic Brenier maps are defined as barycentric projections of entropic optimal couplings between $\rho$ and $\mu$ (resp. $\rho$ and $\nu$), written in short-hand as $T_\eps^{\mu}$ (resp. $T_\eps^{\nu}$), where $\eps>0$ is a regularization parameter (we defer detailed background to \cref{sec: background_eot}).
Our motivation for studying such maps is twofold.
First, entropic Brenier maps are prominent in applications for their statistical and computational properties.
These maps arise directly from the output of \emph{Sinkhorn's algorithm}, a very popular computational approximation to optimal transport~\citep{PeyCut19}.
Moreover, \cite{pooladian2021entropic} advocated for their use as a statistical tool, and analyzed the entropic Brenier map as an estimator for the unregularized Brenier map on the basis of i.i.d.~data from $\rho$ and $\mu$.
Second, entropic optimal transport is a natural smoothed analogue to the optimal transport problem, and it is reasonable to hope that techniques developed for entropic optimal transport can give insights into the structure of the unregularized problem.

Despite the importance of entropic Brenier maps, much less is known about their stability properties.
The first result in this area is due to \cite{carlier2022lipschitz}, who showed that if $\rho$, $\mu$, and $\nu$ are compactly supported, then
\begin{align}\label{eq:stability_entbrenier_carlier}
    \| T_\eps^{\mu} - T_\eps^{\nu}\|_{L^2(\rho)} \leq C_\eps W_2(\mu,\nu)\,,
\end{align}
where $C_\eps$ is a constant that grows exponentially as $\eps$ tends to zero.

This striking result reveals that entropic Brenier maps automatically enjoy better stability properties than unregularized Brenier maps when $\eps > 0$.
However, if~\eqref{eq:stability_entbrenier_carlier} is to be used to extract either practical bounds for entropic Brenier maps or insights about unregularized Brenier maps in the $\eps \to 0$ limit, it is crucial to obtain sharp bounds on the constant $C_\eps$.

\subsection*{Contributions}
The goal of this paper is to improve the Lipschitz constant for the embedding $\mu \mapsto T_\eps^\mu$ as a function of $\eps$. Our main theorem is technical, but it readily implies results in the following three scenarios of interest.

First, if the source and target measures are merely supported in the Euclidean ball of radius $R$, then
\begin{align*}
    \| T_\eps^{\mu} - T_\eps^{\nu}\|_{L^2(\rho)}  \leq \bigl(1 + 2R^2/\eps\bigr) W_2(\mu,\nu)\,,
\end{align*}
see \cref{cor:stab_1}. We stress that none of the measures here require densities, and so, a priori, Brenier maps may not exist, while their entropic counterparts do. Moreover, up to universal constants, we show that this bound is tight; see \Cref{rmk:tight}. This is an exponential improvement on the bounds provided by \cite{carlier2022lipschitz}.

As in the unregularized case, the preceding bounds can be improved under smoothness assumptions on the entropic Brenier potentials.
Such assumptions are arguably more reasonable than in the unregularized case, since it is sometimes possible to obtain \emph{a priori} smoothness bounds for entropic Brenier potentials via elementary tools \citep[see, e.g.,][]{chewi2023entropic}.
If one of the entropic Brenier maps, say $T_\eps^\nu$, is $\Lambda$-Lipschitz, we show that the previous bound can be improved to
\begin{align*}
    \| T_\eps^{\mu} - T_\eps^{\nu}\|_{L^2(\rho)} \leq \bigl(1 + 2\sqrt{{R\Lambda/\eps}}\bigr) W_2(\mu,\nu)\,,
\end{align*}
Going further, if the backward entropic Brenier map  $S_\eps^\nu$ (see \cref{sec: background_eot} for a precise definition) is $1/\lambda$-Lipschitz, then the bound becomes independent of the regularization parameter:
\begin{align*}
        \| T_\eps^{\mu} - T_\eps^{\nu}\|_{L^2(\rho)} \leq \bigl(1 + 2\sqrt{{\Lambda/\lambda}}\bigr) W_2(\mu,\nu)\,.
\end{align*}
See \cref{cor:stab_2} for these last two results. In particular, up to constants, this result is analogous to the stability bound established by \citet{manole2021plugin}.

As a novel application, we turn to the \emph{semi-discrete} setting of optimal transport, where $\mu$ and $\nu$ are both supported on finitely many atoms and $\rho$ has a sufficiently well-behaved density. In this setting, we partially close the gap left by Gigli and others, where we prove that
\begin{align}\label{eq:quantstab_intro}
    \| T_0^\mu - T_0^\nu\|_{L^2(\rho)} \lesssim  W^{1/3}_2(\mu,\nu)\,,
\end{align}
{where $\mu,\nu$ satisfy appropriate regularity conditions, as does the source measure $\rho$, and the suppressed constant depends on these regularity assumptions. While our results do not allow for arbitrary discrete measures, they hold for a wide class of discrete measures and do not require the support of the atoms to be the same. The proof starts from the following application of the triangle inequality
\begin{align*}
\| T_0^\mu - T_0^\nu\|_{L^2(\rho)} \leq \|T_0^\mu - T_\eps^\mu\|_{L^2(\rho)} + \|T_0^\nu - T_\eps^\nu\|_{L^2(\rho)} + C_\eps W_2(\mu,\nu)\,.
\end{align*}
Under appropriate assumptions on $\rho$ and the two discrete measures $\mu$ and $\nu$, we are able to control the first two terms using existing techniques, and the third term can be controlled via \cref{cor:stab_1}. Balancing the resulting terms as a function of $\eps$, we obtain the final bound that appears in \eqref{eq:quantstab_intro}.
Our identification of the sharp constant $C_\eps$ is crucial to obtaining the result.
See \cref{sec:quantstab_semidiscrete} for more details.

\section{Background on optimal transport}\label{sec: background_all}
\subsection{Optimal transport}\label{sec: background_ot}
Let $\cP_2$ be the space of probability measures on $\R^d$ with finite second moment.
For two probability measures $\rho,\nu \in \cP_2$, we define the \textit{(squared) $2$-Wasserstein distance} by
\begin{align}\label{eq:kant_ot_p}
\tfrac12W_2^2(\rho,\nu) := \inf_{\pi \in \Gamma(\rho,\nu)} \iint \tfrac12\|x - z\|^2 \dd \pi(x,z)\,,
\end{align}
where $ \Gamma(\rho,\nu)$ is the set of joint measures with left-marginal $\rho$ and right-marginal $\nu$; see \cite{Vil08, San15} for more information. When $\rho,\nu$ have finite second moments, solutions to \eqref{eq:kant_ot_p} always exist. When $\rho$ has a density, \eqref{eq:kant_ot_p} has a unique minimizer, $\pi_0$, called the the \emph{optimal transport coupling}.

From \eqref{eq:kant_ot_p}, one can derive the \emph{dual formulation} of the 2-Wasserstein distance
\begin{align}\label{eq:kant_ot_d_2}
 \tfrac12W_2^2(\rho,\nu) &= \tfrac12M_2(\rho + \nu) - \inf_{\phi \in L^1(\rho)} \int \phi \dd \rho + \int \phi^* \dd \nu\,,
\end{align}
where $M_2(P) :=  \int \|x\|^2 \dd P(x)$ for a measure $P$, and $\phi^*$ is the convex conjugate of $\phi$, defined for $x\in \R^d$ by
$\phi^*(x) \defeq \sup_y \{\langle x,y \rangle - \phi(y)\}.$

Similar to the primal problem, whenever $\rho$ has a density, the dual problem \eqref{eq:kant_ot_d_2} admits a minimizer $\phi_0$, which can be shown to be a convex function. In fact, we obtain a pair of \emph{Brenier potentials}, written
\begin{align*}
    (\phi_0,\psi_0) \defeq (\phi_0, \phi_0^*) = (\psi_0^*, \psi_0)\,.
\end{align*}
Furthermore, the pair $(\phi_0,\psi_0)$ is unique up to additive constants $(\phi_0+c,\psi_0-c)$.

Recalling \eqref{eq:w22_intro}, the following theorem unifies these three formulations of optimal transport under the squared-Euclidean cost.
\begin{theorem}[\citealp{Bre91}] \label{thm:brenier}
Let $\rho, \nu \in \cP_{2}$. Assume that $\rho$ has  a density with respect to the Lebesgue measure. Then,
\begin{enumerate}
\item the solution to \eqref{eq:w22_intro} exists and is of the form $T_0 = \nabla \phi_0 = \nabla \psi^*_0$, with $\phi_0$ convex being a solution to \eqref{eq:kant_ot_d_2}.
\item $\pi_0$ is also uniquely defined as  $$\dd \pi_0(x,y) = \dd \rho(x) \delta_{\{\nabla \phi_0(x)\}}(y)\,.$$
\end{enumerate}
\end{theorem}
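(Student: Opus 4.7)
The plan is to proceed through the Kantorovich dual formulation \eqref{eq:kant_ot_d_2}, combined with the almost-everywhere differentiability of convex functions. First, I would establish existence of an optimal coupling $\pi_0$ attaining the infimum in \eqref{eq:kant_ot_p}: the feasible set $\Gamma(\rho,\nu)$ is tight because the marginals are prescribed, hence weakly compact by Prokhorov's theorem, and the quadratic cost is weakly lower semicontinuous, so the direct method yields a minimizer.

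Next, I would produce an optimizer of the dual \eqref{eq:kant_ot_d_2} of the form $(\phi_0,\phi_0^\ast)$ with $\phi_0$ convex. After absorbing the quadratic moment term $\tfrac12 M_2(\rho+\nu)$, the dual is really about the cost $\langle x,y\rangle$, for which $c$-concavity reduces to convexity; replacing any candidate $\phi$ by its double convex conjugate $\phi^{\ast\ast}$ (and symmetrically on the $\nu$ side) does not increase the dual objective, so a convex-conjugate optimizing pair exists. Standard no-duality-gap arguments (e.g.\ via Fenchel--Rockafellar, as in \cite{Vil08, San15}) then ensure that the primal and dual values agree and that complementary slackness holds: $\pi_0$ is concentrated on the contact set
\[
\{(x,y)\in\RR^d\times\RR^d : \phi_0(x)+\phi_0^\ast(y) = \langle x,y\rangle\},
\]
which by the Fenchel--Young equality coincides with the graph of the subdifferential $\partial \phi_0$.

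The third and decisive step invokes the classical fact that a convex function on $\RR^d$ is differentiable Lebesgue-a.e.\ on the interior of its domain of finiteness. Since $\rho$ is absolutely continuous with respect to the Lebesgue measure, $\phi_0$ is differentiable $\rho$-a.e., so $\partial\phi_0(x) = \{\nabla\phi_0(x)\}$ for $\rho$-a.e.\ $x$. Disintegrating $\pi_0$ against its first marginal $\rho$, the conditional measure $\pi_0(\cdot\mid x)$ is supported in $\partial\phi_0(x)$, and therefore equals $\delta_{\nabla\phi_0(x)}$ for $\rho$-a.e.\ $x$. This yields assertion (ii). Setting $T_0 \defeq \nabla\phi_0$, the identity $(\id,T_0)_\sharp \rho = \pi_0$ immediately gives $T_{0\sharp}\rho = \nu$ and matches the primal optimum, so $T_0$ attains \eqref{eq:w22_intro}; the relation $\nabla\phi_0 = \nabla\psi_0^\ast$ follows from $\psi_0 = \phi_0^\ast$ and the involutive nature of the Legendre transform on proper l.s.c.\ convex functions. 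Uniqueness of $\pi_0$ follows because \emph{every} optimal coupling must be supported in the graph of $\partial\phi_0$, which is single-valued $\rho$-a.e., so the disintegration is forced.

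The principal obstacle is the dual-side construction: justifying that the infimum in \eqref{eq:kant_ot_d_2} is attained by a convex function, and that equality in the primal--dual pair forces the support of $\pi_0$ into $\partial\phi_0$. Once that structure theorem is in hand, the remainder of the proof is essentially the Lebesgue-a.e.\ differentiability of convex functions combined with the disintegration theorem, and identifying the map $T_0 = \nabla\phi_0$ as the (unique) solution of the Monge problem is routine.
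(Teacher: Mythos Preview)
The paper does not prove this statement; it is quoted as a classical result of \citet{Bre91} in the background section, with no proof supplied. Your sketch is the standard route to Brenier's theorem (Kantorovich duality, reduction to a convex/conjugate pair, support of the optimal plan in the subdifferential, and $\rho$-a.e.\ differentiability of convex functions via absolute continuity), so there is nothing to compare against here.
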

If $\nu$ also has a density, then the \emph{backward} Brenier map, from $\nu$ to $\rho$, is given by
\begin{align*}
    (T_0)^{-1} = \nabla\psi_0 = \nabla\phi_0^*\,.
\end{align*}
To be concise, we simply write $\phi_0^\nu$, $\psi_0^{\nu}$ and $T_0^\nu$ to refer to the optimal transport quantities associated to the problem $W_2^2(\rho,\nu)$ (and similarly for $W_2^2(\rho,\mu)$).

\subsection{Entropic optimal transport}\label{sec: background_eot}
For two probability measures $\rho,\nu \in \cP_2$, the entropic optimal transport objective
\citep{cuturi2013sinkhorn} is defined as 
\begin{align}\label{eq:kant_eot_p}
     \tfrac12W_{2,\eps}^2(\rho,\nu) \defeq \min_{\pi \in \Gamma(\rho,\nu)} \iint \tfrac12\|x - z\|^2 \dd \pi(x,z) + \eps \kl{\pi}{\rho\otimes\nu}\,,
\end{align}
for some $\eps > 0$, and $\kl{\pi}{\rho\otimes\nu}$ is the Kullback--Leibler divergence, defined as
\begin{align*}
    \kl{\pi}{\rho\otimes\nu} \defeq \int \log\left(\frac{\dd \pi}{\dd \rho \otimes \dd \nu}\right)\dd \pi
\end{align*}
when $\pi$ is absolutely continuous with respect to $\rho \otimes \nu$, and $+\infty$ otherwise. Note that due to the regularization term, the problem is strictly convex with a unique minimizer $\pi_\eps^\nu$, the \emph{optimal entropic (transport) coupling}.\footnote{Since $\rho$ is fixed throughout, we use the superscript $\nu$ (respectively, $\mu$) to indicate objects that correspond to the entropic optimal transport problem between $\rho$ and $\nu$ (respectively, $\rho$ and $\mu$).}

The entropic optimal transport problem also admits a dual formulation \citep[see, e.g.,][]{genevay2019entropy}:
\begin{align}\label{eq:kant_eot_d_2}
    \tfrac12W_{2,\eps}^2(\rho,\nu) \defeq \tfrac12M_2(\rho+\nu) - \min_{\phi \in L^1(\rho)} \int \phi \dd \rho + \int \Phi_\eps^\rho[\phi] \dd \nu\,,
\end{align}
where $\Phi_\eps^\nu$ is the following operator
\begin{align*}
   \forall z\in\R^d,\ \Phi_\eps^\rho[\phi](z) \defeq \eps \log \int e^{(\langle x,z \rangle - \phi(x))/\eps}\dd \rho(x)\,,
\end{align*}
which should be thought of as the entropic analogue to the convex conjugate operator. Indeed, notice that as $\eps \to 0$, $\Phi_\eps^\rho[\phi](z) $ converges to the $\rho$-essential supremum of the function $x\mapsto \langle x,z \rangle - \phi(x)$.
We write the minimizer to \eqref{eq:kant_eot_d_2} as $\phi_\eps^\nu$, from which we obtain the minimizing pair of \emph{entropic Brenier potentials}
\begin{align*}
    (\phi_\eps^\nu,\psi_\eps^\nu) \defeq (\phi_\eps^\nu,\Phi_\eps^\rho[\phi_\eps^\nu]) = (\Phi_\eps^\nu[\psi_\eps^\nu],\psi_\eps^\nu)\,,
\end{align*}
where $\Phi_\eps^\nu$ is defined analogously to $\Phi_\eps^\rho$. Again, this pair is unique up to constant shifts. 

Moreover, by the dual optimality conditions, we can define versions of the entropic Brenier potentials taking values in the extended reals, for all $x \in \Rd$ and $z \in \Rd$, respectively. Thus, we freely write
\begin{equation}\label{eq:dual_entropic}
        \begin{split}
        &\phi_\eps^\nu(x) \defeq  \eps \log \int e^{(\langle x,z \rangle - \psi_\eps^\nu(z))/\eps}\dd \nu(z) \quad (x \in \R^d) \\
        &\psi_\eps^\nu(z) \defeq \eps \log \int e^{(\langle x,z \rangle - \phi_\eps^\nu(x))/\eps}\dd \rho(x) \quad (z \in \R^d)\,.
    \end{split}
\end{equation}
See \cite{mena2019statistical,nutz2021entropic} for more details. Note that $\phi_\eps^\nu : \R^d \to \R\cup\{+\infty\}$ (resp. $\psi_\ep^\nu$) is a convex function which is analytic on the interior of its domain $\dom(\phi_\eps^\nu)$ (resp. $\dom(\psi_\eps^\nu)$),

An important feature of the entropic optimal transport problem is that the optimal solutions to~\eqref{eq:kant_eot_p} and~\eqref{eq:kant_eot_d_2} satisfy the following primal-dual relationship \citep{Csi75}:
\begin{align*}
    \dd \pi_\eps^{\nu}(x,z) \defeq \gamma_\eps^{\nu}(x,z) \dd\rho(x)\dd\nu(z) \defeq e^{(\langle x,z\rangle - \psi_\eps^\nu(z) - \phi_\eps^\nu(x))/\eps}\dd\rho(x)\dd\nu(z)\,.
\end{align*}
Concretely, $\gamma_\eps^{\nu}$, the density of $\pi_\eps^{\nu}$ with respect to $\rho \otimes \nu$, can be written explicitly in terms of the entropic Brenier potentials $(\phi_\eps^\nu,\psi_\eps^\nu)$.

Let $(X, Z)$ be a pair of random variables with distribution $\pi_\eps^\nu$.
For a given $x \in \dom(\phi_\eps^\nu)$, we abuse notation and define the conditional probability of $Z|X = x$ as 
\begin{align*}
    \dd \pi_\eps^\nu(z|x) = e^{(\langle x,z \rangle - \phi_\eps^\nu(x) - \psi_\eps^\nu(z))/\eps}\dd\nu(z)\,.
\end{align*}
 Similarly we denote $\pi_\eps^\nu(\cdot|z) \defeq \pi_\eps^\nu(\cdot|Z = z)$ whenever $z\in \dom(\psi_\eps^\nu)$.
Likewise, if $(X, Y)$ are distributed according to the optimal entropic coupling $\pi_\eps^\mu$ between $\rho$ and $\mu$, we will write $\pi_\eps^\mu(\cdot|x)$ and $\pi_\eps^\mu(\cdot|y)$ for the conditional distributions of $Y |X = x$ and $X|Y =y$, respectively.
We will adopt the convention throughout that $X$, $Y$, and $Z$ always refer to random variables with marginal distributions $\rho$, $\mu$, and $\nu$, respectively.

Following e.g., \cite{pooladian2021entropic,rigollet2022sample}, we define, respectively, the \emph{forward} and \emph{backward entropic Brenier maps} from $\rho$ to $\nu$ to be barycentric projections of $\pi_\eps^{\nu}$ \citep[Definition 5.4.2]{Ambrosio2008}: 
for $x\in \dom(\phi_\eps^\nu)$ and $z\in \dom(\psi_\eps^\nu)$, we define
\begin{equation*}
	 T_\eps^\nu(x) \defeq \int z \dd \pi_\eps^\nu(z|x)\,, \quad S_\eps^\nu(z) \defeq \int x \dd \pi_\eps^\nu(x|z)\,
\end{equation*}
whenever the integrals are well-defined. Unlike the unregularized case, $(S_\eps^\nu)^{-1} \neq T_\eps^\nu$. Note that by Jensen's inequality, $T_\eps^\nu \in L^2(\rho)$ with $\|T_\eps^\nu\|_{L^2(\rho)}^2\leq M_2(\nu)$ (resp. $S_\eps^\nu \in L^2(\nu)$ with $\|S_\eps^\nu\|_{L^2(\nu)}^2\leq M_2(\rho)$). Also note 
that by the dominated convergence theorem, the gradient of $\phi_\eps^\nu$ (resp. $\psi_\eps^\nu$) from \eqref{eq:dual_entropic} has a natural interpretation as the forward (resp. backward) entropic Brenier map: whenever $x$ is in the interior of $\dom(\phi_\eps^\nu)$ and $z$ is in the interior of $\dom(\psi_\eps^\nu)$,
\begin{equation}\label{eq:gradients}
	 \nabla \phi_\eps^\nu(x) = T_\eps^\nu(x)\,, \quad \nabla\psi_\eps^\nu(z) = S_\eps^\nu(z)\,.
\end{equation}
 Under the same condition, a similar expression holds for the Hessians of the entropic Brenier potentials \citep[see, e.g.,][Lemma 1]{chewi2023entropic}:
\begin{align}\label{eq:hessians}
    \nabla^2\phi_\eps^\nu(x) = \eps^{-1}\text{Cov}_{\pi_\eps^\nu}(Z|X=x)\,, \quad \nabla^2\psi_\eps^\nu(z) = \eps^{-1}\text{Cov}_{\pi_\eps^\nu}(X|Z=z)\,.
\end{align}

Throughout, we will write $(\phi_\eps^\nu,\psi_\eps^\nu)$ for the entropic Brenier potentials associated to $\tfrac12\OTep(\rho,\nu)$,  $T_\eps^\nu$ for the forward entropic Brenier map, and $S_\eps^\nu$ for the backward entropic Brenier map (and similarly for $\tfrac12\OTep(\rho,\mu)$). 

\subsubsection*{Related work in entropic optimal transport}
\paragraph{Fixed regularization.} The initial motivation for studying \eqref{eq:kant_eot_p} in the machine learning literature was its significant computational benefits compared to \eqref{eq:kant_ot_p} \citep{cuturi2013sinkhorn,AltWeeRig17}. As a result, the study of entropic objects for a fixed $\eps>0$ regularization parameter has been of great interest in a number of fields. For example, \cite{del2022improved,goldfeld2022limit,gonzalez2022weak} studied statistical limit theorems for entropic optimal transport. \cite{greco2023non,
conforti2023quantitative, nutz2023stability} study the convergence of Sinkhorn's algorithm to the optimal Brenier potentials at the population level. The works by \cite{masud2021multivariate,pooladian2022debiaser,rigollet2022sample,stromme2023minimum,klein2023learning,werenski2023estimation} studied additional computational or statistical properties of entropic Brenier maps. As previously mentioned, \cite{carlier2022lipschitz} initiated the study of the stability properties of entropic Brenier maps under variations of the target measure, though their techniques differ significantly from ours.

\paragraph{Vanishing regularization.} Theoretical properties of entropic optimal transport for vanishing regularization parameter are widely studied in both statistical and theoretical works. For example, convergence of the regularized to unregularized couplings was studied by \citet{leonard2012schrodinger,carlier2017convergence, bernton2021entropic, ghosal2021stability}, and convergence of the transport costs by \cite{chizat2020faster,conforti2021formula,eckstein2023convergence,pal2024difference}. \citet{nutz2021entropic} established convergence of the entropic to non-entropic Brenier potentials under minimal assumptions; this convergence was improved in the case of semi-discrete optimal transport by \citet{altschuler2021asymptotics} and \citet{delalande2021nearly}. \citet{chewi2023entropic} established a short proof of Caffarelli's contraction theorem \citep{caffarelli2000monotonicity} via covariance inequalities and entropic optimal transport, which was subsequently generalized by \cite{conforti2022weak}. Statistical convergence of entropic Brenier maps to unregularized Brenier maps was established by \cite{pooladian2021entropic,pooladian2023minimax}, the latter paper focusing on the semi-discrete setting.

\subsection{A transport inequality for conditional entropic couplings}
At the core of our approach is the use of a specific transport inequality which has been developed for other purposes in the study of sampling and functional inequalities \citep{anari2021spectral,anari2021entropic,Chen2022,bauerschmidt2023stochastic}. We refer to \citet[Section 3.7]{bauerschmidt2023stochastic} for more details, and briefly overview the necessary inequalities and notation here.

Let $q \in \cP_2$ be a probability measure with finite moment generating function whose covariance is denoted by $\text{Cov}(q)$. For $h \in \R^d$, we define the tilt  $\mathcal{T}_hq$ of $q$ as the probability measure satisfying
\begin{align*}
  \forall z \in \R^d,\quad   \frac{\dd \mathcal{T}_hq}{\dd q}(z) \defeq \frac{\exp(\langle h,z\rangle)}{\E_{Z\sim q}[\exp(\langle h , Z \rangle)]}\,.
\end{align*}
We say that $q$ is \emph{tilt-stable}\footnote{This name is not standard, but we introduce it here because the standard name for this concept (entropic stability) is likely to cause confusion in the context of our main results.} if for all $h\in \R^d$, $\text{Cov}(\mathcal{T}_hq) \preceq C_{\mathsf{T}} I$ for some $C_{\mathsf{T}} > 0$. If $q$ is tilt-stable with constant $C_{\mathsf{T}}$, then for all probability measures $p \in \cP_2$,
\begin{align*}
    \|\E_{p}[X] - \E_{q}[X]\|^2\leq 2C_{\mathsf{T}}\kl{p}{q}\,,
\end{align*}
see \cite[Lemma 3.21]{bauerschmidt2023stochastic}.

Our main observation is that conditional entropic couplings are tilt-stable, with a constant that can be written in terms of the entropic Brenier potentials.
For an entropic potential $\phi_\eps^\nu$ whose domain is all of $\R^d$, we write
\begin{equation}\label{eq:hmax}
    H_{\max}(\phi_\eps^\nu) \defeq \sup_{u\in\R^d} \|\text{Cov}_{\pi_\eps^\nu}(Z|X=u)\|_{\text{op}} = \eps \sup_{u\in\R^d} \|\nabla^2 \phi_\eps^\nu(u)\|_{\text{op}} \,,
\end{equation}
and define $H_{\max}(\psi_\eps^\nu)$ analogously.
The second equality in~\eqref{eq:hmax} is justified by the fact that the identity~\eqref{eq:hessians} holds everywhere when $\dom(\phi_\eps^\nu) = \R^d$.
If either potential is not finite on all of $\R^d$, we adopt the convention that $H_{\max} = +\infty$.
\begin{lemma}\label{lem:tiltcondcouplings}
    Let $x\in \R^d$ and let $\pi_\eps^\nu(\cdot|x)$ be a conditional entropic coupling between two probability measures $\rho, \nu \in \cP_2$. Assume that $\dom(\phi_\eps^\nu)=\R^d$. Then for any $h \in \R^d$, 
    \begin{align*}
        \mathcal{T}_h\pi_\eps^\nu(\cdot|x) = \pi_\eps^\nu(\cdot|x+\eps h)\,.
    \end{align*}
\end{lemma}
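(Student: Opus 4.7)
The plan is to prove the identity by a direct computation of the Radon--Nikodym derivative of $\mathcal{T}_h \pi_\eps^\nu(\cdot|x)$ with respect to $\nu$, and then recognize the result as the density of $\pi_\eps^\nu(\cdot|x+\eps h)$ with respect to $\nu$. The key input is the explicit form of the conditional entropic coupling already written down in the background section, namely
\begin{align*}
    \dd \pi_\eps^\nu(z|x) = e^{(\langle x,z\rangle - \phi_\eps^\nu(x) - \psi_\eps^\nu(z))/\eps}\dd\nu(z)\,,
\end{align*}
together with the dual expression~\eqref{eq:dual_entropic} for $\phi_\eps^\nu$ in terms of $\psi_\eps^\nu$.

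First I would unfold the definition of the tilt: by construction,
\begin{align*}
    \dd (\mathcal{T}_h\pi_\eps^\nu(\cdot|x))(z) = \frac{e^{\langle h,z\rangle}}{\int e^{\langle h,z'\rangle}\dd \pi_\eps^\nu(z'|x)}\,\dd \pi_\eps^\nu(z|x)\,.
\end{align*}
Substituting the explicit density for $\pi_\eps^\nu(\cdot|x)$, the $\phi_\eps^\nu(x)$ factors cancel between numerator and denominator, and the exponent combines as $\langle h,z\rangle + (\langle x,z\rangle -\psi_\eps^\nu(z))/\eps = (\langle x+\eps h,z\rangle -\psi_\eps^\nu(z))/\eps$. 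This reduces the right-hand side to
\begin{align*}
    \frac{e^{(\langle x+\eps h,z\rangle - \psi_\eps^\nu(z))/\eps}}{\int e^{(\langle x+\eps h,z'\rangle - \psi_\eps^\nu(z'))/\eps}\dd \nu(z')}\,\dd\nu(z)\,.
\end{align*}

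Next I would identify the denominator using~\eqref{eq:dual_entropic}. Since $\dom(\phi_\eps^\nu) = \R^d$, the point $x+\eps h$ lies in the domain, and
\begin{align*}
    \int e^{(\langle x+\eps h,z'\rangle - \psi_\eps^\nu(z'))/\eps}\dd \nu(z') = e^{\phi_\eps^\nu(x+\eps h)/\eps}\,,
\end{align*}
which is finite. Plugging this back in yields
\begin{align*}
     \dd (\mathcal{T}_h\pi_\eps^\nu(\cdot|x))(z) = e^{(\langle x+\eps h,z\rangle - \phi_\eps^\nu(x+\eps h) - \psi_\eps^\nu(z))/\eps}\,\dd\nu(z) = \dd \pi_\eps^\nu(z|x+\eps h)\,,
\end{align*}
which is exactly the claim.

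There is no real obstacle here; the argument is just a bookkeeping calculation. The only subtlety is the finiteness of the normalizing integral, and the hypothesis $\dom(\phi_\eps^\nu)=\R^d$ is precisely what guarantees this for every $h\in\R^d$, so the tilted measure is always a well-defined probability measure on $\R^d$.
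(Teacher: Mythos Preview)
Your proof is correct and follows essentially the same approach as the paper's: both compute the Radon--Nikodym derivative of the tilted measure with respect to $\nu$, identify the normalizing constant via the dual relation~\eqref{eq:dual_entropic} as $e^{\phi_\eps^\nu(x+\eps h)/\eps}$, and recognize the result as the density of $\pi_\eps^\nu(\cdot|x+\eps h)$. The only cosmetic difference is that the paper first isolates the computation of the normalizing constant $\E_{Z\sim \pi_\eps^\nu(\cdot|x)}[e^{\langle h,Z\rangle}]$ before assembling the density, whereas you do the cancellation and identification in one pass.
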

\begin{proof}
We temporarily omit the superscript in $\nu$ for ease of reading. Note that
    \begin{align*}
        \E_{Z\sim \pi_\eps(\cdot|x)}[e^{\langle h, Z\rangle }] = e^{-\phi_\eps(x)/\eps}\int e^{( \langle x+\eps h,  z \rangle - \psi_\eps(z))/\eps}\dd\nu(z) = e^{(\phi_\eps(x+\eps h) - \phi_\eps(x))/\eps}\,.
    \end{align*}
    From this we can conclude, since for all $z\in \R^d$
    \begin{align*}
        \frac{\dd\cT_h\pi_\eps(\cdot|x)}{\dd\nu}(z) &= e^{( \langle x + \eps h , z \rangle  - \phi_\eps(x) - \psi_\eps(z))/\eps} e^{(\phi_\eps(x) - \phi_\eps(x+\eps h))/\eps} \\
        &= e^{( \langle x + \eps h , z \rangle  - \phi_\eps(x+\eps h) - \psi_\eps(z))/\eps} \\
        &= \frac{\dd\pi_\eps(\cdot|x+\eps h)}{\dd \nu}(z)\,. \qedhere
    \end{align*}
\end{proof}
\begin{corollary}\label{cor:etransport}
The conditional entropic coupling $\pi_\eps^\nu(\cdot|x)$ (resp. $\pi_\eps^\nu(\cdot|z)$) is tilt-stable with constant $H_{\max}(\phi_\eps^\nu)$ (resp. $H_{\max}(\psi_\eps^\nu)$).
\end{corollary}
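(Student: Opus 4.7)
The statement follows almost immediately from \cref{lem:tiltcondcouplings}, so no real technical work is required; the proposal is to simply chain the definitions. First I would fix $x \in \R^d$ and $h \in \R^d$ and apply \cref{lem:tiltcondcouplings} to identify the tilt:
\[
\cT_h \pi_\eps^\nu(\cdot \mid x) \;=\; \pi_\eps^\nu(\cdot \mid x + \eps h).
\]
In particular, the covariance of the tilted distribution is nothing but the conditional covariance at the shifted point,
\[
\cov(\cT_h \pi_\eps^\nu(\cdot \mid x)) \;=\; \cov_{\pi_\eps^\nu}(Z \mid X = x + \eps h).
\]

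Next I would invoke the definition of $H_{\max}(\phi_\eps^\nu)$ in \eqref{eq:hmax}: by the assumption (implicit in $H_{\max}(\phi_\eps^\nu) < \infty$) that $\dom(\phi_\eps^\nu) = \R^d$, the supremum there ranges over all $u \in \R^d$, so in particular
\[
\op{\cov_{\pi_\eps^\nu}(Z \mid X = x + \eps h)} \;\le\; H_{\max}(\phi_\eps^\nu).
\]
Since the conditional covariance is symmetric and positive semidefinite, its operator norm is its largest eigenvalue, and therefore $\cov_{\pi_\eps^\nu}(Z \mid X = x + \eps h) \preceq H_{\max}(\phi_\eps^\nu) I$ in the Loewner order. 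Because $h \in \R^d$ was arbitrary, this establishes tilt-stability of $\pi_\eps^\nu(\cdot \mid x)$ with constant $H_{\max}(\phi_\eps^\nu)$, as required.

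The statement for $\pi_\eps^\nu(\cdot \mid z)$ follows by an identical argument applied to the symmetric version of \cref{lem:tiltcondcouplings}, obtained by swapping the two marginals (so that $\psi_\eps^\nu$ and $\Phi_\eps^\nu$ play the roles of $\phi_\eps^\nu$ and $\Phi_\eps^\rho$): one concludes $\cT_h \pi_\eps^\nu(\cdot \mid z) = \pi_\eps^\nu(\cdot \mid z + \eps h)$ and then invokes the definition of $H_{\max}(\psi_\eps^\nu)$. There is no genuine obstacle here; the only point to be careful about is that the identity \eqref{eq:hessians} relating the Hessians of the entropic potentials to conditional covariances holds everywhere on $\R^d$, which is precisely the role of the finiteness hypothesis on $H_{\max}$.
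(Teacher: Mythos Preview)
Your proposal is correct and matches the paper's proof essentially line for line: apply \cref{lem:tiltcondcouplings} to identify the tilt as a shifted conditional, then bound its covariance uniformly via the definition of $H_{\max}$, with the symmetric statement handled identically. The paper also explicitly notes the vacuous case when $\dom(\phi_\eps^\nu)\neq\R^d$, which you address as well.
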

\begin{proof}
If the domain of $\phi_\eps^\nu$ is not $\R^d$, then $H_{\max}(\phi_\eps^\nu)=+\infty$, and the proposition is vacuous. Otherwise, fix $x \in \R^d$. By definition of tilt stability, it suffices to compute an upper bound on the covariance of $\mathcal{T}_h\pi^\nu_\eps(\cdot|x)$ which holds uniformly over all tilts $h \in \R^d$. This follows by direct computation, as \cref{lem:tiltcondcouplings} and \eqref{eq:hmax} imply that
    \begin{align*}
        \text{Cov}(\cT_h \pi_\eps^\nu(\cdot|x)) = \text{Cov}(\pi_\eps^\nu(\cdot|x+\eps h)) 
        \preceq H_{\max}(\phi_\eps^\nu)I\,, 
    \end{align*}
    where the last inequality holds by taking the supremum over both $x$ and $h$ arguments. Note that the argument is symmetric for the other conditional entropic coupling. 
\end{proof}

\section{Main results}\label{sec: improved_stability}
We now present our general stability result for entropic Brenier maps. 

\begin{theorem}[Stability of entropic Brenier maps]\label{thm:stab_gen}
Suppose $\rho,\mu,\nu$ have finite second moment. Then
    \begin{align*}
            \| T_\eps^\mu - T_\eps^\nu\|_{L^2(\rho)} \leq \Bigl(1 + \frac{2(H_{\max}(\phi_\eps^\nu) H_{\max}(\psi_\eps^\nu))^{1/2}}\eps \Bigr)W_2(\mu,\nu)\,.
    \end{align*}
\end{theorem}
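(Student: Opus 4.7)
The plan is to realize both $T_\eps^\mu(x)$ and $T_\eps^\nu(x)$ as barycentric projections of conditional entropic couplings, and to use the tilt-stability inequality of \cref{cor:etransport} to bound their difference in terms of a relative entropy. Fix a $W_2$-optimal coupling $\gamma \in \Gamma(\mu,\nu)$ and introduce the three-coupling $\tilde\pi^{(3)}(x,y,z) \defeq \pi_\eps^\mu(x,y)\gamma(z|y)$; let $\tilde\pi \in \Gamma(\rho,\nu)$ be its $(x,z)$-marginal and write $\bar T(x) \defeq \int z \dd\tilde\pi(z|x) = \int \bar\gamma(y)\dd\pi_\eps^\mu(y|x)$ for its barycentric projection, where $\bar\gamma(y) \defeq \int z \dd\gamma(z|y)$. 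The triangle inequality then splits
\[
\|T_\eps^\mu - T_\eps^\nu\|_{L^2(\rho)} \;\leq\; \|T_\eps^\mu - \bar T\|_{L^2(\rho)} + \|\bar T - T_\eps^\nu\|_{L^2(\rho)}.
\]

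Two successive applications of Jensen's inequality---first to $\pi_\eps^\mu(\cdot|x)$ and then to $\gamma(\cdot|y)$---bound the first summand by $W_2(\mu,\nu)$:
\[
\|T_\eps^\mu - \bar T\|_{L^2(\rho)}^2 \;\leq\; \int \|y - \bar\gamma(y)\|^2 \dd\mu(y) \;\leq\; \int \|y-z\|^2 \dd\gamma(y,z) \;=\; W_2^2(\mu,\nu),
\]
which accounts for the ``$1$'' in the claimed constant. For the second summand, applying \cref{cor:etransport} pointwise with $q = \pi_\eps^\nu(\cdot|x)$ (tilt-stable with constant $\Hphi$) and $p = \tilde\pi(\cdot|x)$, then integrating against $\rho$ and invoking the chain rule for relative entropy, yields
\[
\|\bar T - T_\eps^\nu\|_{L^2(\rho)}^2 \;\leq\; 2\,\Hphi\,\KL(\tilde\pi \| \pi_\eps^\nu).
\]

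The main obstacle is then to prove the sharp KL estimate $\KL(\tilde\pi \| \pi_\eps^\nu) \leq \tfrac{2}{\eps^2}\Hpsi\,W_2^2(\mu,\nu)$; combined with the preceding display this gives $\|\bar T - T_\eps^\nu\|_{L^2(\rho)} \leq \tfrac{2(\Hphi \Hpsi)^{1/2}}{\eps}\,W_2(\mu,\nu)$, completing the proof. I would attack this by first rewriting $\tilde\pi(x|z) = \int \pi_\eps^\mu(x|y)\gamma(y|z)\dd y$ (using $\mu(y)\gamma(z|y) = \nu(z)\gamma(y|z)$) and applying convexity of $\KL$ in its first argument, reducing matters to controlling $\E_\gamma[\KL(\pi_\eps^\mu(\cdot|Y) \,\|\, \pi_\eps^\nu(\cdot|Z))]$. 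By the backward variant of \cref{lem:tiltcondcouplings}, the family $\{\pi_\eps^\nu(\cdot|z)\}_z$ is an exponential family with natural parameter $z/\eps$ and log-partition function $\psi_\eps^\nu(\eps\,\cdot)/\eps$; tilt-stability translates into the curvature bound $\|\nabla^2\psi_\eps^\nu\|_{\mathrm{op}} \leq \Hpsi/\eps$, and the standard Bregman-divergence identity then gives $\KL(\pi_\eps^\nu(\cdot|y)\|\pi_\eps^\nu(\cdot|z)) \leq \tfrac{\Hpsi}{2\eps^2}\|y-z\|^2$. The truly delicate point is that $\pi_\eps^\mu(\cdot|y)$ does not belong to this exponential family, so one is forced to use the decomposition
\[
\KL(\pi_\eps^\mu(\cdot|y) \| \pi_\eps^\nu(\cdot|z)) = \KL(\pi_\eps^\mu(\cdot|y)\|\pi_\eps^\nu(\cdot|y)) + \KL(\pi_\eps^\nu(\cdot|y)\|\pi_\eps^\nu(\cdot|z)) + \tfrac{1}{\eps}\langle S_\eps^\mu(y) - S_\eps^\nu(y), y-z\rangle,
\]
and to absorb the first and third cross-terms. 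The first integrates (against $\mu$) to the non-negative sub-optimality of $(\phi_\eps^\nu,\psi_\eps^\nu)$ in the $\mu$-dual, and the third can be attacked via Cauchy--Schwarz; producing the precise constant $2$ rather than a larger universal constant will likely require combining this identity with its symmetric counterpart (swapping the roles of $\mu$ and $\nu$) so that the unfavorable cross-terms cancel against each other while the favorable Bregman contribution doubles.
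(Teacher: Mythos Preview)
Your reduction to the averaged backward relative entropy $\bar I = \E_\gamma\bigl[\KL(\pi_\eps^\mu(\cdot|Y)\,\|\,\pi_\eps^\nu(\cdot|Z))\bigr]$ is correct and coincides with the paper's argument: your $\tilde\pi(\cdot|x)$ is exactly the paper's $Q(\cdot|x)$, and your chain-rule/convexity step is the Jensen inequality $I\le\bar I$ (\cref{prop:step1} and \eqref{eq:i_to_i_bar}). The gap lies in how you propose to bound $\bar I$.

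Your three-term decomposition $\bar I=A+B+C$ (suboptimality, Bregman, cross-term) is valid, and the bound $B\le \Hpsi\,W_2^2/(2\eps^2)$ is correct, but the endgame is imprecise in two ways. First, the only symmetrization that keeps everything in terms of the $\nu$-potentials is adding the \emph{reverse} divergence $\tilde I\defeq\E_\gamma\bigl[\KL(\pi_\eps^\nu(\cdot|Z)\,\|\,\pi_\eps^\mu(\cdot|Y))\bigr]\ge 0$; if ``swapping the roles of $\mu$ and $\nu$'' means rerunning your decomposition with the measures interchanged, the Bregman term picks up $H_{\max}(\psi_\eps^\mu)$ and you cannot recover the one-sided statement of the theorem. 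Second, Cauchy--Schwarz on $C$ leaves you with $\|S_\eps^\mu-S_\eps^\nu\|_{L^2(\mu)}$, which still has to be controlled; this requires a \emph{second} application of tilt-stability (now on the backward conditionals), not merely the curvature bound on $\psi_\eps^\nu$ already used for $B$. Your heuristic that ``the unfavorable cross-terms cancel while the Bregman contribution doubles'' is not what the correct symmetrization actually does.

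The paper's route from $\bar I$ is more direct and bypasses the three-term split entirely. Expanding the log-densities shows that the potential terms in $\eps\bar I$ and $\eps\tilde I$ cancel exactly, yielding the identity $\eps(\bar I+\tilde I)=\E_\gamma\bigl[\langle S_\eps^\mu(Y)-S_\eps^\nu(Z),\,Y-Z\rangle\bigr]$ (\cref{prop:step2}). Tilt-stability is then applied a second time, to the pair $(\pi_\eps^\mu(\cdot|y),\pi_\eps^\nu(\cdot|z))$, giving $\E_\gamma\|S_\eps^\mu(Y)-S_\eps^\nu(Z)\|^2\le 2\Hpsi\,\bar I$ (\cref{prop:step3}). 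Cauchy--Schwarz now produces the \emph{self-bounding} inequality $\eps\bar I\le W_2(\mu,\nu)\sqrt{2\Hpsi\,\bar I}$, from which $\bar I\le 2\eps^{-2}\Hpsi\,W_2^2(\mu,\nu)$ follows at once with the sharp constant. The key step you did not isolate is this second tilt-stability application that makes $\bar I$ bound itself.
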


\begin{remark}
Note that if the potentials $(\phi_\eps^\nu,\psi_\eps^\nu)$ are not finite everywhere, the quantities $H_{\max}(\phi^\nu_\eps)$ and $H_{\max}(\psi^\nu_\eps)$ are infinite by convention, and the inequality becomes vacuous.  The potentials are finite everywhere whenever $\rho$ and $\nu$ have moment-generating functions that are finite everywhere (including the important case of bounded supports), but also when $\rho$ and $\nu$ have support equal to $\R^d$, without additional tail assumptions. 
\end{remark}

From here, we can prove the results highlighted in the introduction as special cases.
\vspace{-3mm}
\begin{corollary}[Entropic stability for bounded measures]\label{cor:stab_1} Suppose $\rho$ and $\nu$ are supported in $B(0;R)$, and $\mu$ has finite second moment. Then 
\begin{align*}
\| T_\eps^\mu - T_\eps^\nu\|_{L^2(\rho)} \leq \Bigl(1 + \frac{2R^2}{\eps}\Bigr)W_2(\mu,\nu)\,.
\end{align*}
\end{corollary}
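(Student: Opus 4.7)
The plan is to invoke Theorem~\ref{thm:stab_gen} directly and reduce the corollary to a uniform estimate on the two quantities $H_{\max}(\phi_\eps^\nu)$ and $H_{\max}(\psi_\eps^\nu)$. First I would observe that, because $\rho$ and $\nu$ are compactly supported, their moment generating functions are finite everywhere; by the remark following Theorem~\ref{thm:stab_gen} this ensures $\dom(\phi_\eps^\nu) = \dom(\psi_\eps^\nu) = \R^d$, so the Hessian identities \eqref{eq:hessians} hold globally and the definitions of $H_{\max}(\phi_\eps^\nu)$ and $H_{\max}(\psi_\eps^\nu)$ reduce to genuine suprema of conditional covariances rather than $+\infty$ by convention.

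The heart of the argument is a one-line estimate on conditional covariances. For any $x\in\R^d$, the conditional law $\pi_\eps^\nu(\cdot\mid x)$ is absolutely continuous with respect to $\nu$, and hence is supported inside $\supp(\nu) \subseteq B(0;R)$. For \emph{any} random vector $Z$ taking values in $B(0;R)$ and any unit vector $v\in\R^d$,
\[
v^\top \mathrm{Cov}(Z) v = \var(\langle v, Z\rangle) \leq \E\langle v, Z\rangle^2 \leq R^2,
\]
so $\|\mathrm{Cov}(Z)\|_{\mathrm{op}}\leq R^2$. Applying this bound to $Z\sim\pi_\eps^\nu(\cdot\mid x)$ and then taking the supremum over $x$ yields $H_{\max}(\phi_\eps^\nu)\leq R^2$. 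The symmetric argument, using $\supp(\rho)\subseteq B(0;R)$ and the conditional coupling $\pi_\eps^\nu(\cdot\mid z)$, gives $H_{\max}(\psi_\eps^\nu)\leq R^2$.

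Plugging these two bounds into the conclusion of Theorem~\ref{thm:stab_gen} yields
\[
\| T_\eps^\mu - T_\eps^\nu\|_{L^2(\rho)} \leq \Bigl(1 + \frac{2\bigl(R^2 \cdot R^2\bigr)^{1/2}}{\eps}\Bigr) W_2(\mu,\nu) = \Bigl(1 + \frac{2R^2}{\eps}\Bigr) W_2(\mu,\nu),
\]
which is the claimed bound. There is no real obstacle here once Theorem~\ref{thm:stab_gen} is in hand: the corollary is essentially a bookkeeping exercise combining the general stability inequality with the elementary fact that a random vector bounded in $B(0;R)$ has covariance of operator norm at most $R^2$. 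The only subtlety worth double-checking is that $\mu$ is \emph{not} assumed to be compactly supported in the hypothesis, but this does not enter the argument: the Hessian bounds depend only on $\rho$ and $\nu$ through the conditionals of $\pi_\eps^\nu$, while $\mu$ appears only through the factor $W_2(\mu,\nu)$ on the right-hand side of Theorem~\ref{thm:stab_gen}.
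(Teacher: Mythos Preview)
Your proposal is correct and follows exactly the same route as the paper's proof: apply Theorem~\ref{thm:stab_gen} and bound both $H_{\max}(\phi_\eps^\nu)$ and $H_{\max}(\psi_\eps^\nu)$ by $R^2$ via the covariance identity~\eqref{eq:hessians} and the support assumption on $\rho$ and $\nu$. The paper states this in a single sentence; your version simply fills in the details (finiteness of the potentials, the elementary covariance bound for $B(0;R)$-valued random vectors, and the observation that no compactness is needed on $\mu$).
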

\begin{proof}
Since $\rho$ and $\nu$ are supported in $B(0;R)$,   \eqref{eq:hessians} implies that both $H_{\max}(\phi_\eps^\nu)$ and $H_{\max}(\psi_\eps^\nu)$ are smaller than $R^2$, which completes the proof.  
\end{proof}
Note that \cref{cor:stab_1} is \emph{entirely general} in its requirements, and does not rely on smoothness of maps, nor do any of the measures require densities.

\begin{remark}[Tightness of \cref{cor:stab_1}]\label{rmk:tight}
We now demonstrate that \cref{cor:stab_1} is tight for general bounded probability measures. Fix $R > 0$, and let $p_\theta \defeq \tfrac12\delta_{Re_\theta} + \tfrac12\delta_{-Re_\theta}$ with $e_\theta = (\cos(\theta),\sin(\theta))$, for $\theta \in [0,\tfrac{\pi}{2}]$. Let $\rho \defeq p_{\pi/2}$ and $\eps > 0$, and let $\pi_\eps^\theta$ denote the entropic optimal coupling between $\rho$ and $p_\theta$ for $\theta \in [0,\tfrac{\pi}{2})$. One can deduce that the optimal entropic coupling is symmetric for any such $\theta$: 
\begin{align*}
    \pi_\eps^\theta(x,y) = \pi_\eps^\theta(-x,-y)\,,
\end{align*}
Following the calculations in \citet[Section 3]{altschuler2021asymptotics}, one can choose $\psi_\eps^\theta(e_\theta)=\psi_\eps^\theta(-e_\theta)=0$, so that by \eqref{eq:dual_entropic}, for all $x\in \R^d$,
\begin{align*}
    \phi_\eps^\theta(x) = \eps \log \bigl(\tfrac12e^{R\langle x, e_\theta \rangle/\eps} + \tfrac12e^{R\langle x, -e_\theta \rangle/\eps}\bigr)\,.
\end{align*}
 Then we compute
\begin{align*}
    T_\eps^\theta(x) = Re_\theta\bigl(\pi_\eps^\theta(x,e_\theta) - \pi_\eps^\theta(x,-e_\theta)\bigr) = Re_\theta \tanh({R\langle x, e_\theta\rangle}/\eps)\,.
\end{align*}
Let $\mu \defeq p_0$ and $\nu\defeq p_\theta$. Following the above calculations, it is clear that
\begin{align*}
    \|T_\eps^\mu - T_\eps^\nu\|_{L^2(\rho)} 
    = R \|e_\theta\| \sqrt{\tanh^2(R\sin(\theta)/\eps)} = \frac{R^2\theta}{\eps} + O(\theta^4)\,.
\end{align*}
It is also easy to verify that $W_2(p_0,p_\theta) \asymp \theta$, since the optimal transport map from $p_0$ to $p_\theta$ is the standard $2 \times 2$ rotation matrix acting on the dirac masses. This example shows that for $\theta$ small the dependence $R^2 \eps^{-1}$ in \cref{cor:stab_1} is tight.
\end{remark}

The following example provides the entropic analogue of Theorem 6 from \citet{manole2021plugin}; their result is formally recovered in the $\eps \to 0$ limit.

\begin{corollary}[Improved stability under smoothness]\label{cor:stab_2}
Suppose $T_\eps^\nu$ is uniformly $\Lambda$-Lipschitz. If $\rho$ is supported in $B(0;R)$, then
\begin{align*}
\|T_\eps^\mu - T_\eps^\nu\|_{L^2(\rho)} \leq \bigl( 1 + 2\sqrt{\Lambda R^2/\eps} \bigr) W_2(\mu,\nu) \,. 
\end{align*}
If instead $S_\eps^\nu$ is uniformly $1/\lambda$-Lipschitz, then
\begin{align*}
\|T_\eps^\mu - T_\eps^\nu\|_{L^2(\rho)} \leq (1 + 2\sqrt{\Lambda/\lambda}) W_2(\mu,\nu) \,.
\end{align*}
\end{corollary}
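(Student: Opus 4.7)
The plan is to invoke \cref{thm:stab_gen} and bound the two Hessian suprema $H_{\max}(\phi_\eps^\nu)$ and $H_{\max}(\psi_\eps^\nu)$ under each set of hypotheses. By \eqref{eq:gradients}, the forward and backward entropic Brenier maps are the gradients $T_\eps^\nu = \nabla\phi_\eps^\nu$ and $S_\eps^\nu = \nabla\psi_\eps^\nu$, so a Lipschitz assumption on either map translates directly into a uniform operator-norm bound on the corresponding Hessian, which by definition \eqref{eq:hmax} is exactly what enters $H_{\max}$.

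For the first bound, I would argue as follows. The assumption that $T_\eps^\nu$ is $\Lambda$-Lipschitz, together with \eqref{eq:hmax}, gives $H_{\max}(\phi_\eps^\nu) \leq \eps\Lambda$. To bound $H_{\max}(\psi_\eps^\nu)$, observe that for each $z$ the conditional coupling $\pi_\eps^\nu(\cdot\mid z)$ is absolutely continuous with respect to $\rho$, so it is supported in $B(0;R)$. By the standard fact that a random variable $X$ supported in $B(0;R)$ satisfies $\|\mathrm{Cov}(X)\|_{\mathrm{op}} \leq \EE\|X\|^2 \leq R^2$, we conclude via \eqref{eq:hessians} that $H_{\max}(\psi_\eps^\nu) \leq R^2$. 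Multiplying these two bounds and plugging into \cref{thm:stab_gen} yields
\begin{align*}
    \frac{2(H_{\max}(\phi_\eps^\nu) H_{\max}(\psi_\eps^\nu))^{1/2}}{\eps} \leq \frac{2\sqrt{\eps\Lambda R^2}}{\eps} = 2\sqrt{\Lambda R^2/\eps}\,,
\end{align*}
which is the first claim.

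For the second bound, the same argument gives $H_{\max}(\phi_\eps^\nu)\leq \eps\Lambda$, while now the $1/\lambda$-Lipschitz hypothesis on $S_\eps^\nu=\nabla\psi_\eps^\nu$ yields $H_{\max}(\psi_\eps^\nu) \leq \eps/\lambda$. Substituting into \cref{thm:stab_gen} produces the $\eps$-independent bound $1+2\sqrt{\Lambda/\lambda}$.

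There is no real obstacle: both parts are direct specializations of \cref{thm:stab_gen}. The only mildly subtle point worth flagging is that the Lipschitz hypothesis on $T_\eps^\nu$ (resp.\ $S_\eps^\nu$) forces $\dom(\phi_\eps^\nu) = \RR^d$ (resp.\ $\dom(\psi_\eps^\nu)=\RR^d$), so that the identification of the Lipschitz constant with the supremum of $\|\nabla^2 \phi_\eps^\nu\|_{\mathrm{op}}$ in \eqref{eq:hmax} is valid; otherwise the relevant $H_{\max}$ would be $+\infty$ by convention and the bound would be vacuous anyway.
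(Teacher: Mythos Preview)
Your proposal is correct and follows exactly the paper's own argument: both parts are obtained by substituting the bounds $H_{\max}(\phi_\eps^\nu)\leq \eps\Lambda$ (from the Lipschitz assumption on $T_\eps^\nu$) and $H_{\max}(\psi_\eps^\nu)\leq R^2$ or $H_{\max}(\psi_\eps^\nu)\leq \eps/\lambda$ into \cref{thm:stab_gen}. Your added remarks on the covariance bound and on the domain of $\phi_\eps^\nu$ are fine elaborations but do not depart from the paper's approach.
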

\begin{proof}
The first claim follows from the bounds $H_{\max}(\phi_\eps^\nu) \leq \Lambda\eps $, which follows from \eqref{eq:hmax}, and $H_{\max}(\psi_\eps^\nu)\leq R^2$. For the second, we instead use that $H_{\max}(\psi_\eps^\nu) \leq \eps/\lambda $.
\end{proof}

\subsection{Proof of Theorem~\ref{thm:stab_gen}}\label{sec:entstab_proof_sketch}
Our proof relies on three propositions.
To continue, we require the following objects. Let $\tau \in \Gamma(\mu,\nu)$ be a fixed (though not necessarily unique) optimal transport coupling between $\mu$ and $\nu$. For $z\in \R^d$, let $\tau(\cdot|z)$ be associated (regular) conditional measure \citep[see][Chapter 10]{bogachev2007measure}, so that for all measurable $f:\R^d\times \R^d \to [0,+\infty)$
\[ \iint f(y,z)\dd\tau(y,z) = \int \p{\int f(y,z)\dd \tau(y|z)}\dd\nu(z)\,.\]
For $x\in \R^d$, let $Q(\cdot|x)$ be the probability measure with
\begin{align}\label{eq:def_Qx}
   \forall z\in \R^d,\ \frac{ \dd Q(\cdot|x)}{\dd\nu}(z) \defeq \int \gamma_\eps^\mu(x,y)\dd\tau(y|z)\,,
\end{align}
where $\gamma_\eps^\mu(x,y)$ is the density of $\pi_\eps^\mu$ w.r.t.~$\rho\otimes\mu$. Note that this indeed defines a density as we have the relation
\[
\int  \p{\int \gamma_\eps^\mu(x,y)\dd \tau(y|z)}\dd\nu(z)= \iint \gamma_\eps^\mu(x,y)\dd \tau(y,z)=\int \gamma_\eps^\mu(x,y)\dd\mu(y)=1\,.
\]

We also define the conditional Kullback--Leibler divergence:
\begin{align}\label{eq:def_I}
    I \defeq \int \kl{Q(\cdot|x)}{\pi_\eps^\nu(\cdot|x)}\dd\rho(x)\,,
\end{align}
We are now in a position to proceed with the proof. First, we decompose the difference of forward entropic Brenier maps into a $W_2(\mu,\nu)$ term, plus a term depending on $I$.
\begin{prop}\label{prop:step1}
Suppose $\rho,\mu,\nu$ have finite second moment. Then
\begin{align*}
    \|T_\eps^\mu - T_\eps^\nu\|_{L^2(\rho)} \leq W_2(\mu,\nu) + (2 \Hphi I)^{1/2}\,.
\end{align*}
\end{prop}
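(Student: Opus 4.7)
\textbf{Proof plan for Proposition~\ref{prop:step1}.} The strategy is to introduce, for each fixed $x$, the auxiliary measure $Q(\cdot|x)$ as a bridge between $\pi_\eps^\mu(\cdot|x)$ and $\pi_\eps^\nu(\cdot|x)$, so that one part of the error is controlled by $W_2(\mu,\nu)$ via Jensen, and the other by the transport inequality for tilt-stable measures applied to $\pi_\eps^\nu(\cdot|x)$. Set
\[
S(y) \defeq \int z \dd \tau(z|y),
\]
so that by Fubini and the definition of $Q(\cdot|x)$,
\[
\int z \dd Q(z|x) = \iint z\, \gamma_\eps^\mu(x,y)\dd\tau(z|y)\dd\mu(y) = \int S(y) \dd \pi_\eps^\mu(y|x).
\]
Since $T_\eps^\mu(x) = \int y \dd \pi_\eps^\mu(y|x)$ and $T_\eps^\nu(x) = \int z \dd \pi_\eps^\nu(z|x)$, I can write the pointwise decomposition
\[
T_\eps^\mu(x) - T_\eps^\nu(x) = \int (y - S(y))\dd \pi_\eps^\mu(y|x) + \Bigl(\int z \dd Q(z|x) - \int z \dd \pi_\eps^\nu(z|x)\Bigr),
\]
and apply the triangle inequality in $L^2(\rho)$.

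For the first term, two applications of Jensen give
\[
\Bigl\|\int (y-S(y))\dd\pi_\eps^\mu(y|\cdot)\Bigr\|_{L^2(\rho)}^2 \leq \E_{Y\sim \mu}\|Y - S(Y)\|^2 \leq \E_{(Y,Z)\sim \tau}\|Y-Z\|^2 = W_2^2(\mu,\nu),
\]
where I use that $\pi_\eps^\mu$ has right-marginal $\mu$ for the first step, and that $S(Y)$ is the conditional mean of $Z$ given $Y$ under $\tau$ for the second. So this term contributes $W_2(\mu,\nu)$.

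For the second term, the key is to recognize it as a difference of barycenters of two measures on $\R^d$. By \cref{cor:etransport}, $\pi_\eps^\nu(\cdot|x)$ is tilt-stable with constant $\Hphi$, and hence the transport inequality of \citet[Lemma 3.21]{bauerschmidt2023stochastic} yields
\[
\Bigl\|\int z \dd Q(z|x) - \int z \dd \pi_\eps^\nu(z|x)\Bigr\|^2 \leq 2\Hphi \cdot \mathrm{KL}(Q(\cdot|x)\|\pi_\eps^\nu(\cdot|x)).
\]
Integrating against $\rho$ and using the definition of $I$ in \eqref{eq:def_I} gives $(2\Hphi I)^{1/2}$ for the second term, completing the proof after the triangle inequality.

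The only genuinely nontrivial step is the choice of $Q(\cdot|x)$: it is engineered precisely so that its barycenter equals $\int S(y)\dd\pi_\eps^\mu(y|x)$ (making the first term a clean optimal-transport cost) while still being comparable to $\pi_\eps^\nu(\cdot|x)$ in KL (so the transport inequality, which is the whole reason the factor $1/\eps$ rather than $e^{1/\eps}$ eventually appears, can be applied). Everything else is essentially Fubini, Jensen, and invocation of the previously established tilt-stability corollary.
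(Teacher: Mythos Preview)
Your proof is correct and follows essentially the same approach as the paper: the decomposition into a first term controlled by $W_2(\mu,\nu)$ via Jensen and a second term controlled by tilt-stability of $\pi_\eps^\nu(\cdot|x)$ (\cref{cor:etransport}) is exactly what the paper does. The only cosmetic difference is that you introduce $S(y)=\E_\tau[Z\mid Y=y]$ and write the first term as $\int (y-S(y))\dd\pi_\eps^\mu(y|x)$, whereas the paper keeps it as $\iint (y-z)\gamma_\eps^\mu(x,y)\dd\tau(y,z)$---but these are the same quantity after disintegrating $\tau$ along $y$, and the ensuing Jensen bound is identical.
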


\begin{proof}[Proof of \cref{prop:step1}]
We assume that $\phi_\eps^\nu$ is finite everywhere, for otherwise there is nothing to prove. 
We have that
\begin{align*}
&T_\eps^\mu(x)- T_\eps^\nu(x) = \int y\dd \pi_\eps^\mu(y|x) - \int z \dd \pi_\eps^\nu(z|x) \\
&= \iint y\gamma_\eps^\mu(x,y) \dd \tau(y,z) - \int z \dd \pi_\eps^\nu(z|x) \\
&= \iint (y-z) \gamma_\eps^\mu(x,y) \dd \tau(y,z) + \iint z (\gamma_\eps^\mu(x,y)\dd \tau(y,z) - \dd \pi_\eps^\nu(z|x)) \\
&= \iint (y-z) \gamma_\eps^\mu(x,y) \dd \tau(y,z) + \int z \dd(Q(z|x) - \pi_\eps^\nu(z|x))\,.
\end{align*}

Taking the $L^2(\rho)$-norm of both sides and applying Minkowski's and Jensen's inequalities yields
\begin{multline*}
    \| T_\eps^\mu- T_\eps^\nu \|_{L^2(\rho)} \leq \left(\iint \|y-z\|^2 \gamma_\eps^\mu(x,y)\dd\tau(y,z) \dd \rho(x)\right)^{1/2} \\
      + \Bigl\| \int z \dd(Q(z|\cdot) - \pi_\eps^\nu(z|\cdot)) \Bigr\|_{L^2(\rho)}\,.
\end{multline*}
Since $\tau$ is an optimal coupling between $\mu$ and $\nu$ and $\int \gamma_\eps^\mu(x,y) \dd \rho(x)=1$, the first term is $W_2(\mu, \nu)$.
For the second term, \cref{cor:etransport} implies for all $x \in \RR^d$
\begin{equation*}
	\Bigl\| \int z \dd(Q(z|x) - \pi_\eps^\nu(z|x)) \Bigr\| \leq \sqrt{2\Hphi\kl{Q(\cdot|x)}{\pi_\eps^\nu(\cdot|x)}}\,.
\end{equation*}
Therefore
\begin{align*}
	\Bigl\| \int z \dd(Q(z|\cdot) - \pi_\eps^\nu(z|\cdot)) \Bigr\|_{L^2(\rho)} & \leq \sqrt{2 \Hphi\int \kl{Q(\cdot|x)}{\pi_\eps^\nu(\cdot|x)} \dd \rho(x)} \\
	& = (2 \Hphi I)^{1/2}\,,
\end{align*}
which completes the proof.
\end{proof}

In \eqref{eq:def_I}, we defined the conditional relative entropy between $Q(\cdot|x)$ and the conditional entropic coupling $\pi_\eps^\nu(\cdot|x)$. We now turn to directly bounding the quantity $I$.
Note that for all $z\in \R^d$
\begin{align*}
\frac{\dd Q(\cdot|x)}{\dd\pi_\eps^\nu(\cdot|x)}(z) =\frac{\int \gamma_\eps^\mu(x,y)\dd\tau(y|z)}{\gamma_\eps^\nu(x,z)}\,.
\end{align*}
An application of Jensen's inequality then yields that
\begin{align}\label{eq:i_to_i_bar}
   I \leq \bar{I} \defeq \iiint  \log\Bigl(\frac{\gamma_\eps^\mu(x,y)}{\gamma_\eps^\nu(x,z)}\Bigr) \gamma_\eps^\mu(x,y)\dd\tau(y,z)\dd\rho(x)\,.
\end{align}
Expanding the densities $\gamma_\eps^\mu(x,y)$ and $\gamma_\eps^\nu(x,z)$ and performing the integration, we obtain
\begin{align*}
\eps \bar{I} &= \iiint \langle x ,y-z \rangle \gamma_\eps^\mu(x,y)\dd \tau(y,z)\dd\rho(x)  + \int \phi_\eps^\nu \dd\rho + \int \psi_\eps^\nu \dd\nu - \int \phi_\eps^\mu\dd\rho - \int \psi_\eps^\mu\dd\mu \\
&= \iint \langle S_\eps^\mu(y), y-z\rangle \dd \tau(y,z)+ \int \phi_\eps^\nu \dd\rho + \int \psi_\eps^\nu \dd\nu - \int \phi_\eps^\mu\dd\rho - \int \psi_\eps^\mu\dd\mu
\end{align*}
where we use the equality $S_\eps^\mu(y)=\int x\gamma_\eps^\mu(x,y)\dd \rho(x)$ in the last line. 
If we define $\tilde{I}$ as a symmetric analogue to $\bar{I}$, namely,
\begin{align*}
    \tilde{I} \defeq \iiint  \log\Bigl(\frac{\gamma_\eps^\nu(x,z)}{\gamma_\eps^\mu(x,y)}\Bigr) \gamma_\eps^\nu(x,z)\dd\tau(y,z)\dd\rho(x)\,,
\end{align*}
then, since $0 \leq \tilde{I}$, a symmetric calculation immediately yields the following: 
\begin{prop}\label{prop:step2} Suppose $\rho,\mu,\nu$ have finite second moment. Then
\begin{align}\label{eq:barIbound}
    \eps \bar{I} \leq \eps(\bar{I}+\tilde{I}) = \iint \langle S_\eps^\mu(y) - S_\eps^\nu(z), y-z\rangle \dd \tau(y,z)\,.
\end{align}
\end{prop}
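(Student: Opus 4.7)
}

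The plan is to break the statement into two pieces: the inequality $\eps\bar{I} \leq \eps(\bar{I}+\tilde{I})$, and the equality $\eps(\bar{I}+\tilde{I}) = \iint \langle S_\eps^\mu(y) - S_\eps^\nu(z), y-z\rangle \dd\tau(y,z)$. For the inequality, I would observe that both $\bar I$ and $\tilde I$ are genuine KL divergences between probability measures on $\R^{3d}$. Specifically, let $P_1(\dd x,\dd y,\dd z) \defeq \gamma_\eps^\mu(x,y)\dd\rho(x)\dd\tau(y,z)$ and $P_2(\dd x,\dd y,\dd z) \defeq \gamma_\eps^\nu(x,z)\dd\rho(x)\dd\tau(y,z)$. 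A short check using $\int \gamma_\eps^\mu(x,y)\dd\rho(x) = 1$ for $\mu$-a.e.\ $y$ and $\int \gamma_\eps^\nu(x,z)\dd\rho(x) = 1$ for $\nu$-a.e.\ $z$ shows that $P_1$ and $P_2$ are both probability measures. Then $\bar I = \kl{P_1}{P_2}$ and $\tilde I = \kl{P_2}{P_1}$, so in particular $\tilde I \geq 0$, which gives the first inequality.

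For the equality, I would simply redo the computation of $\eps\bar I$ that the text performs just before the proposition, with the roles of $(\mu,y,\phi_\eps^\mu,\psi_\eps^\mu)$ and $(\nu,z,\phi_\eps^\nu,\psi_\eps^\nu)$ interchanged. Expanding the densities $\gamma_\eps^\mu(x,y) = e^{(\langle x,y\rangle - \phi_\eps^\mu(x) - \psi_\eps^\mu(y))/\eps}$ and $\gamma_\eps^\nu(x,z) = e^{(\langle x,z\rangle - \phi_\eps^\nu(x) - \psi_\eps^\nu(z))/\eps}$ inside the log in $\tilde I$, integrating term by term against $\gamma_\eps^\nu(x,z)\dd\rho(x)\dd\tau(y,z)$, and using the marginal identity $S_\eps^\nu(z) = \int x\,\gamma_\eps^\nu(x,z)\dd\rho(x)$, yields
\begin{align*}
\eps \tilde{I} &= \iiint \langle x, z-y\rangle \gamma_\eps^\nu(x,z)\dd\tau(y,z)\dd\rho(x) + \int \phi_\eps^\mu\dd\rho + \int \psi_\eps^\mu\dd\mu - \int \phi_\eps^\nu\dd\rho - \int \psi_\eps^\nu\dd\nu \\
&= \iint \langle S_\eps^\nu(z), z-y\rangle \dd\tau(y,z) + \int \phi_\eps^\mu\dd\rho + \int \psi_\eps^\mu\dd\mu - \int \phi_\eps^\nu\dd\rho - \int \psi_\eps^\nu\dd\nu\,.
\end{align*}

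Finally, I would add this to the expression for $\eps\bar I$ already displayed in the text. The four potential-integral terms appear with opposite signs in $\eps\bar I$ and $\eps\tilde I$ and therefore cancel exactly, leaving
\begin{align*}
\eps(\bar I + \tilde I) = \iint \langle S_\eps^\mu(y), y-z\rangle \dd\tau(y,z) + \iint \langle S_\eps^\nu(z), z-y\rangle\dd\tau(y,z) = \iint \langle S_\eps^\mu(y) - S_\eps^\nu(z), y-z\rangle\dd\tau(y,z)\,,
\end{align*}
which is the claimed identity. There is no real obstacle here: the argument is a bookkeeping exercise combining a nonnegativity-of-KL observation with a direct symmetric version of the computation the text already performed. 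The only point of (mild) care is confirming that $P_1$ and $P_2$ are probability measures so that $\tilde I \geq 0$ can be invoked without appealing only to the algebraic inequality $\log t \geq 1 - 1/t$ applied pointwise.
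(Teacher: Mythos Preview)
Your proposal is correct and follows essentially the same approach as the paper: the paper simply asserts $0\leq \tilde I$ and says that ``a symmetric calculation immediately yields'' the result, while you spell out both of these steps in full. Your explicit identification of $\bar I=\kl{P_1}{P_2}$ and $\tilde I=\kl{P_2}{P_1}$ for the probability measures $P_1,P_2$ on $\R^{3d}$ is a clean way to justify the nonnegativity the paper takes for granted, but it is not a different route.
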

All in all, our bound currently reads
\begin{align*}
    \|T_\eps^\mu -T_\eps^\nu\|_{L^2(\rho)} &\leq W_2(\mu,\nu) \\
    &\qquad + (2 \eps^{-1} \Hphi)^{1/2} \Bigl(\iint \langle S_\eps^\mu(y) - S_\eps^\nu(z), y-z\rangle \dd \tau(y,z) \Bigr)^{1/2}\,.
\end{align*}
The second term depends on the two backward entropic Brenier maps, $S_\eps^\mu$ and $S_\eps^\nu$.
The next proposition shows that tilt stability can again be used to bound the difference between these backward maps by $\bar I$.
\begin{prop}\label{prop:step3}
Suppose $\rho,\mu,\nu$ have finite second moment.
    Then
    \begin{align*}
        \iint \|S_\eps^\mu(y) - S_\eps^\nu(z) \|^2 \dd \tau(y,z)  \leq 2 \Hpsi \bar{I}\,.
    \end{align*}
\end{prop}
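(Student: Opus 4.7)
The plan is to mirror the structure of \cref{prop:step1} but swap the roles of $x$ and $z$: apply tilt stability of the $z$-conditional coupling $\pi_\eps^\nu(\cdot|z)$, which by \cref{cor:etransport} has tilt constant $\Hpsi$. As in \cref{prop:step1}, we may assume $\dom(\psi_\eps^\nu) = \R^d$, since otherwise $\Hpsi = +\infty$ and the inequality is vacuous.

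The key observation is that both $S_\eps^\mu(y) = \int x \dd \pi_\eps^\mu(x|y)$ and $S_\eps^\nu(z) = \int x \dd \pi_\eps^\nu(x|z)$ can be written as means of probability measures \emph{on the same space} $\R^d$ (namely the respective $x$-conditionals of the entropic couplings $\pi_\eps^\mu$ and $\pi_\eps^\nu$). So for each fixed $(y,z)$, I would directly apply the tilt-stability transport inequality (Lemma 3.21 of \citealp{bauerschmidt2023stochastic}) with $p = \pi_\eps^\mu(\cdot|y)$ and $q = \pi_\eps^\nu(\cdot|z)$, yielding the pointwise bound
\begin{equation*}
    \|S_\eps^\mu(y) - S_\eps^\nu(z)\|^2 \leq 2\,\Hpsi\, \KL\bigl(\pi_\eps^\mu(\cdot|y)\,\|\,\pi_\eps^\nu(\cdot|z)\bigr)\,.
\end{equation*}

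Integrating both sides against $\tau(y,z)$, it remains to identify the integrated KL divergence with $\bar{I}$. Since $\dd \pi_\eps^\mu(x|y) = \gamma_\eps^\mu(x,y)\dd\rho(x)$ and $\dd \pi_\eps^\nu(x|z) = \gamma_\eps^\nu(x,z)\dd\rho(x)$, expanding the KL divergence and using Fubini gives
\begin{equation*}
    \iint \KL\bigl(\pi_\eps^\mu(\cdot|y)\,\|\,\pi_\eps^\nu(\cdot|z)\bigr) \dd \tau(y,z) = \iiint \log\Bigl(\frac{\gamma_\eps^\mu(x,y)}{\gamma_\eps^\nu(x,z)}\Bigr)\gamma_\eps^\mu(x,y)\dd\rho(x)\dd\tau(y,z) = \bar{I}\,,
\end{equation*}
which matches the definition of $\bar I$ from~\eqref{eq:i_to_i_bar} exactly. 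Combining these two displays yields the claim.

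There is no genuine obstacle here: the calculation requires only the pointwise transport inequality from tilt-stability and a one-line rewriting of the KL divergence. The slight subtlety is simply noticing that, unlike the analogous step in \cref{prop:step1} (where an additional auxiliary measure $Q(\cdot|x)$ had to be introduced and Jensen was invoked to pass to $\bar I$), here no auxiliary measure or Jensen step is needed: the integrated KL equals $\bar I$ on the nose because the conditional densities $\gamma_\eps^\mu(x,y)$ and $\gamma_\eps^\nu(x,z)$ are exactly the objects appearing in $\bar I$.
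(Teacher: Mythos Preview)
Your proof is correct and is essentially identical to the paper's own argument: apply tilt-stability of $\pi_\eps^\nu(\cdot|z)$ via \cref{cor:etransport} to get the pointwise bound $\|S_\eps^\mu(y)-S_\eps^\nu(z)\|^2 \leq 2\Hpsi\,\KL(\pi_\eps^\mu(\cdot|y)\|\pi_\eps^\nu(\cdot|z))$, then integrate against $\tau$ and recognize the result as $\bar I$. Your closing remark that no auxiliary measure or Jensen step is needed here (in contrast to \cref{prop:step1}) is accurate and a nice observation.
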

\begin{proof}
We assume that $\psi_\eps^\nu$ is finite everywhere, for otherwise there is nothing to prove. 
Recall that $S_\eps^\nu(z) = \int x \dd \pi_\eps^\nu(x|z)$ and similarly for $S_\eps^\mu(y)$. By \cref{cor:etransport}, we have the following bound
\begin{align*}
    \| S_\eps^\nu(z) - S_\eps^\mu(y) \|^2 &= \Bigl\| \int x \dd(\pi_\eps^\nu(x|z) - \pi_\eps^\mu(x|y))\Bigr\|^2 \\
    &\leq 2 \Hpsi\kl{\pi_\eps^\mu(\cdot|y)}{\pi_\eps^\nu(\cdot|z)} \\
    &= 2 \Hpsi \int \log\Bigl(\frac{\gamma_\eps^\mu(x,y)}{\gamma_\eps^\nu(x,z)}\Bigr)\gamma_\eps^\mu(x,y)\dd\rho(x)\,.
\end{align*} 
Integrating with respect to $\tau$ concludes the proof.
\end{proof}
Finally, with these results in hand, we can prove our main result.

\begin{proof}[Proof of \cref{thm:stab_gen}]
From \eqref{eq:barIbound}, we apply Cauchy-Schwarz, resulting in
\begin{align*}
    \eps \bar{I} &\leq \iint \langle S_\eps^\mu(y) - S_\eps^\nu(z), y-z\rangle \dd \tau(y,z) \leq W_2(\mu,\nu)(2 H_{\max}(\psi_\eps^\nu) \bar{I})^{1/2}\,.
\end{align*}
This ultimately implies 
\begin{align*}
    I^{1/2} \leq \bar{I}^{1/2} \leq \eps^{-1} W_2(\mu,\nu)(2H_{\max}(\psi_\eps^\nu))^{1/2}\,,
\end{align*}
where we recall the first inequality from \eqref{eq:i_to_i_bar}. Together with \cref{prop:step1}, the proof is complete.
\end{proof}
\section{Application: Improved quantitative stability of semi-discrete optimal transport maps}\label{sec:quantstab_semidiscrete}
As an application of our new stability results for entropic Brenier maps, we turn to proving quantitative stability results for \emph{unregularized} optimal transport maps. As highlighted in the introduction, our proof technique for proving quantitative stability for optimal transport maps is based on the following decomposition:
\begin{align}\label{eq:bias_variance_decomp}
\begin{split}
    \| T_0^{\mu}-T_0^{\nu}\|_{L^2(\rho)} &\leq \| T_0^{\mu} - T_\eps^{\mu}\|_{L^2(\rho)} + \| T_0^{\nu} - T_\eps^{\nu}\|_{L^2(\rho)} \\
    &\qquad + \| T_\eps^{\mu} - T_\eps^{\nu}\|_{L^2(\rho)}\,.
\end{split}
\end{align}
Recall that \cref{cor:stab_1} takes care of the last term under virtually no assumptions other than boundedness of the measures. It remains to control the first two terms in the above decomposition, also known as \emph{bias} terms.

To the best of our knowledge, bounds on the bias of entropic Brenier maps are known only under strong assumptions.
For example, \citet[Corollary 1]{pooladian2021entropic} show that
\begin{align*}
    \| T_0^{\mu} - T_\eps^{\mu}\|_{L^2(\rho)}^2 \lesssim \eps^2 I_0(\rho,\mu)\,,
\end{align*}
where $I_0(\rho,\mu)$ is the integrated Fisher information along the Wasserstein geodesic between $\rho$ and $\mu$, where $\rho$ and $\mu$ have upper and lower bounded densities over compact domains. Such assumptions, while essential for estimating optimal transport maps on the basis of samples, are too restrictive for our purposes.\footnote{Indeed, under these assumptions, it is well-known via Caffarelli regularity theory \citep{caffarelli1992boundary,
caffarelli1996boundary} that the corresponding optimal transport map is Lipschitz, so the results of \cite{gigli2011holder} already imply a stability bound.}

With this in mind, our goal is to establish quantitative control on the bias of entropic Brenier maps under less restrictive regularity conditions. Specifically, we turn to the \emph{semi-discrete} setting, where $\rho$ has a density, and $\mu$ and $\nu$ are both discrete measures.  As we will shortly see, this setting allows for \eqref{eq:bias_variance_decomp} to be used to obtain meaningful bounds on the stability of two semi-discrete optimal transport maps when the discrete measures themselves have favorable properties. 

We briefly recall some fundamental notions from semi-discrete optimal transport: let $\mu = \sum_{j=1}^J \mu_j \delta_{y_j}$ be a discrete probability measure with atoms located at the points $\{y_j\}_{j=1}^J$ with corresponding weights $\mu_j > 0$. In this setting, the Brenier potential is given explicitly by 
\begin{align*}
    \phi_0^\mu(x) = \max_{j \in \{1,\ldots,J\}} \langle x, y_j\rangle - (\psi_0^\mu)_j\,,
\end{align*}
where $\psi_0^\mu \in \R^J$ is the dual potential. Note that $\phi_0^\mu$ is $\rho$-almost everywhere differentiable, and so the Brenier map $T_0^\mu = \nabla \phi_0^\mu$ is well-defined. The inverse transport map is now set-valued, where for a given target atom $y_j$, we define the \emph{Laguerre cell} $L_j \defeq (T_0^\mu)^{-1}(y_j)$. These cells partition the support of $\rho$. Consequently, for $x \in L_j$, the optimal transport mapping is $x \mapsto T_0^\mu(x) = y_j$.

With these notions in hand, we are ready to present the following result on the convergence of entropic Brenier maps to their unregularized counterpart; its proof is located in \cref{sec: biases}. This is a slightly different version of \citet[Theorem 3.4]{pooladian2023minimax}, based off the results of  \citet{delalande2021nearly}.
\begin{prop}[Quantitative bias in the semi-discrete setting]\label{prop:bias_main}
    Let $\rho$ be a compactly supported probability distribution with a density over $\R^d$ and $\mu$ be a discrete measure, written $\mu = \sum_{j=1}^J\mu_j \delta_{y_j}$. Then for all $\eps > 0$, 
    \begin{align}\label{eq:biasbound_prelim}
        \| T_0^\mu - T_\eps^\mu\|_{L^2(\rho)}^2 \leq e^{2\|\psi_0^\mu - \psi_\eps^\mu\|_\infty/\eps}\eps \sum_{i,j} \frac{\|y_i - y_j\|}{2} \int_{0}^\infty \!\!\!h_{ij}(u\eps)\Bigl( 1 + \tfrac{\mu_i}{\mu_j}e^{u/2} \Bigr)^{-1} \dd u\,,
    \end{align}
    where $0 \leq h_{ij}(\cdot)$ measures the amount of overlap between $L_i$ and $L_j$ weighted against the source measure $\rho$ (see \eqref{eq:defhij} in the appendix for precise details). 
    
    In addition, suppose that
    \begin{description}
        \item \textbf{(S1)} the density of $\rho$ has convex support  in $B(0;R)$, is $\alpha$-H{\"o}lder continuous for $\alpha \in (0,1]$, and there exist $\rho_{\min},\rho_{\max}$ such that $0 < \rho_{\min} \leq \rho(x) \leq \rho_{\max}$,
        \item \textbf{(S2)} the support of $\mu$ lies in $B(0;R)$, and all the weights are uniformly lower-bounded i.e., $\mu_j \geq \mu_{\min} > 0$ for all $j \in \{1,\ldots,J\}$.
    \end{description}
    Then it holds that, for all $\eps>0$
    \begin{align}\label{eq:bias_main}
    \| T_0^\mu - T_\eps^\mu\|^2_{L^2(\rho)} \leq C_0
    e^{C_1\eps^{\alpha}} \eps\,,
    \end{align}
    where the constants depend on $\rho_{\min}, \rho_{\max}, d, R, \mu_{\min}, J$,  $\min_{i\neq j}\|y_i - y_j\|$, and on the maximum angle formed by three non aligned points among the atoms $\{y_j\}_{j=1}^J$.
\end{prop}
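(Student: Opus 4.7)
My plan has two parts, corresponding to the two inequalities in the statement.

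For the preliminary bound \eqref{eq:biasbound_prelim}, I would start from the explicit semi-discrete form $T_\eps^\mu(x) = \sum_j y_j \pi_\eps^\mu(y_j|x)$, where $\pi_\eps^\mu(y_j|x) \propto \mu_j \exp((\langle x, y_j\rangle - \psi_\eps^\mu(y_j))/\eps)$. Since $T_0^\mu \equiv y_i$ on the Laguerre cell $L_i$, partitioning $\rho$ along the cells and applying Jensen's inequality with the probability weights $\{\pi_\eps^\mu(y_j|\cdot)\}_j$ reduces the estimate to controlling $\sum_{i,j} \|y_i - y_j\|^2 \int_{L_i} \pi_\eps^\mu(y_j|x) \dd\rho(x)$. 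For $x \in L_i$, I keep only the $i$-th and $j$-th terms in the softmax denominator to obtain $\pi_\eps^\mu(y_j|x) \leq (1 + \pi_\eps^\mu(y_i|x)/\pi_\eps^\mu(y_j|x))^{-1}$. The ratio $\pi_\eps^\mu(y_i|x)/\pi_\eps^\mu(y_j|x)$ equals $(\mu_i/\mu_j)$ times an exponential in $(c_{ij}(x) + \delta_{ij})/\eps$, where $c_{ij}(x) \defeq \langle x, y_i - y_j\rangle - (\psi_0^\mu(y_i) - \psi_0^\mu(y_j))$ is the nonnegative Kantorovich gap (vanishing on the $L_i$--$L_j$ interface) and $|\delta_{ij}| \leq 2\|\psi_\eps^\mu - \psi_0^\mu\|_\infty$. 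Replacing $\psi_\eps^\mu$ by $\psi_0^\mu$ in the exponent introduces the prefactor $e^{2\|\psi_\eps^\mu - \psi_0^\mu\|_\infty/\eps}$.

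Next I apply a co-area/layer-cake rewriting of the integral $\int_{L_i}(\cdot)\dd\rho(x)$ along the level sets of the affine function $c_{ij}$, whose gradient has norm $\|y_i - y_j\|$, followed by the change of variables $u = t/\eps$. The Jacobian factor $1/\|y_i - y_j\|$ from coarea, combined with the $\eps$ from rescaling, is exactly what converts the $\|y_i-y_j\|^2$ weight into $\eps\|y_i - y_j\|/2$. A short geometric-mean manipulation (essentially $1 + Ae^X \geq 1 + \sqrt{A}e^{X/2}$ applied in the right regime) then produces the specific decaying factor $(1 + (\mu_i/\mu_j)e^{u/2})^{-1}$ appearing in \eqref{eq:biasbound_prelim}.

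For the quantitative estimate \eqref{eq:bias_main}, two ingredients must be controlled under (S1)--(S2). First, I invoke the Hölder-type stability of semi-discrete dual potentials established by \citet{delalande2021nearly}, giving $\|\psi_\eps^\mu - \psi_0^\mu\|_\infty \leq C \eps^\alpha$, and hence the prefactor $e^{C_1 \eps^\alpha}$. Second, I estimate $h_{ij}$ geometrically: under (S1)--(S2), $\{x \in L_i : c_{ij}(x) \leq t\}$ lies in a Euclidean slab of width $t/\|y_i - y_j\|$ about a facet of the polyhedral Laguerre cell, so $h_{ij}(t) \leq \rho_{\max}\,\mathcal{H}^{d-1}(L_i \cap L_j)\,t/\|y_i - y_j\|$, with the constant uniformly controlled via $\min_{i\neq j}\|y_i-y_j\|$ and the maximum-angle condition; this is complemented by the trivial bound $h_{ij}(t) \leq 1$. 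Then $\int_0^\infty h_{ij}(u\eps)(1+(\mu_i/\mu_j)e^{u/2})^{-1}\dd u$ is at most an $\eps$-independent constant (the rapid exponential decay of the integrand ensures convergence, and the lower bound $\mu_j \geq \mu_{\min}$ provides uniform control across pairs). Plugging back into \eqref{eq:biasbound_prelim} yields the desired $C_0 e^{C_1 \eps^\alpha}\eps$ bound.

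The main obstacle is the Hölder stability $\|\psi_\eps^\mu - \psi_0^\mu\|_\infty \lesssim \eps^\alpha$ under the comparatively mild regularity (S1)--(S2); this is exactly where the strong-convexity machinery of \citet{delalande2021nearly} for the semi-discrete dual functional is indispensable and cannot be replaced by elementary arguments. A secondary technical subtlety in Part 1 is engineering the comparison between $\pi_\eps^\mu(y_j|x)$ and the level-set weights so that precisely the form $(1+(\mu_i/\mu_j)e^{u/2})^{-1}$ emerges rather than a cruder exponential, since this specific decay is what makes the integral finite uniformly in $\eps$ and pairs $(i,j)$. The remaining pieces---decomposing over Laguerre cells, applying Jensen, and estimating slab measures via $\rho_{\max}$---are standard and reduce to tracking constants through the geometric parameters of the semi-discrete problem.
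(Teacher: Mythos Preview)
Your overall strategy matches the paper's almost line for line: restrict to $L_i$, drop all but the $i$th and $j$th terms in the softmax, replace $\psi_\eps^\mu$ by $\psi_0^\mu$ at the cost of the prefactor $e^{2\|\psi_\eps^\mu-\psi_0^\mu\|_\infty/\eps}$, apply Jensen, co-area, and rescale. Two slips need fixing.

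First, the ``geometric-mean manipulation'' you invoke is a red herring, and the inequality $1+Ae^X\geq 1+\sqrt{A}\,e^{X/2}$ is false when $\sqrt{A}\,e^{X/2}<1$. The factor $e^{u/2}$ in \eqref{eq:biasbound_prelim} is purely a normalization: the paper works with $\Delta_{ij}(x)=2c_{ij}(x)$, so that $|\nabla\Delta_{ij}|=2\|y_i-y_j\|$, and after co-area and the substitution $u=t/\eps$ the exponent $\Delta_{ij}/(2\eps)$ becomes $u/2$. With your $c_{ij}$ you would obtain $(1+(\mu_i/\mu_j)e^{v})^{-1}$ after $v=s/\eps$, and the further substitution $u=2v$ recovers the paper's form exactly---no inequality is needed.

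Second, and more consequentially, the bound you quote from \citet{delalande2021nearly} is off by a factor of $\eps$. What is needed, and what their Corollary~3.2 actually provides under \textbf{(S1)}--\textbf{(S2)}, is $\eps^{-1}\|\psi_\eps^\mu-\psi_0^\mu\|_\infty\leq C\eps^{\alpha}$, i.e.\ $\|\psi_\eps^\mu-\psi_0^\mu\|_\infty\leq C\eps^{1+\alpha}$. With only $\|\psi_\eps^\mu-\psi_0^\mu\|_\infty\leq C\eps^{\alpha}$ the prefactor would be $e^{2C\eps^{\alpha-1}}$, which diverges as $\eps\to 0$ and fails to give \eqref{eq:bias_main}. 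Finally, your handling of $h_{ij}$ is more elaborate than needed and conflates the hyperplane density $h_{ij}(t)$ with a slab measure; the paper simply uses the uniform bound $h_{ij}(t)\leq \rho_{\max}\,\cH_{d-1}\bigl(B(0;R)\cap H_{ij}(t)\bigr)$, which together with the exponential decay of $(1+(\mu_i/\mu_j)e^{u/2})^{-1}$ and $\mu_j\geq\mu_{\min}$ makes the integral uniformly finite.
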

\begin{remark}
Following the asymptotic results of \citet{altschuler2021asymptotics}, we can take a limit of \eqref{eq:biasbound_prelim}, resulting in the computation
\begin{align*}
    \limsup_{\eps\to\infty}\eps^{-1}\|T_0^\mu - T_\eps^\mu\|^2_{L^2(\rho)} &= \sum_{i,j}\frac{\|y_i - y_j\|h_{ij}(0)}{2}\int_0^\infty\Bigl( 1 + \tfrac{\mu_i}{\mu_j}e^{u/2} \Bigr)^{-1} \dd u \\
    &= \sum_{i,j}\|y_i - y_j\|h_{ij}(0) \log(1 + \mu_j/\mu_i)\,,
\end{align*}
where we used that $t\mapsto h_{ij}(t)$ is continuous at $t=0$ (which holds if, for instance, $\rho$ has an upper bounded density with compact support). We conjecture that this quantity is uniformly bounded for all discrete measures.
\end{remark}

Combined with \eqref{eq:bias_variance_decomp} and \cref{cor:stab_1}, we can state and prove our main theorem for this section. While the conditions do not allow for arbitrary discrete measures, we stress that they permit a wide class of discrete measures, and in particular measures supported on different masses.
To our knowledge, this is the first general improvement to the stability bound of \cite{delalande2021quantitative}, even in the semi-discrete case.
\vspace{-5mm}
\begin{theorem}[Near-tight stability in the semi-discrete setting]\label{thm:stability_semidiscrete}
    Suppose $\rho$ satisfies \textbf{(S1)}, and both $\mu$ and $\nu$ each independently satisfy \textbf{(S2)} (with possibly all different parameters). 
    Then
    \begin{align*}
        \|T_0^\mu - T_0^\nu\|_{L^2(\rho)} \lesssim W_2^{1/3}(\mu,\nu)\,,
    \end{align*}
    where the underlying constants depend on those from \cref{prop:bias_main}. 
\end{theorem}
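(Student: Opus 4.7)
The plan is to execute the bias-plus-stability decomposition already announced in \eqref{eq:bias_variance_decomp} and optimize over the regularization parameter $\eps$. First, I invoke \eqref{eq:bias_variance_decomp} to bound $\|T_0^\mu - T_0^\nu\|_{L^2(\rho)}$ by the sum of the two bias terms $\|T_0^\mu - T_\eps^\mu\|_{L^2(\rho)}$, $\|T_0^\nu - T_\eps^\nu\|_{L^2(\rho)}$, and the entropic stability term $\|T_\eps^\mu - T_\eps^\nu\|_{L^2(\rho)}$. Assumptions \textbf{(S1)} on $\rho$ and \textbf{(S2)} on each of $\mu$ and $\nu$ put me in the setting of \cref{prop:bias_main}, so each bias term is at most $(C_0 e^{C_1 \eps^{\alpha}} \eps)^{1/2}$. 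Provided $\eps$ is restricted to a bounded range, the exponential prefactor is controlled by a constant depending only on the \textbf{(S1)}--\textbf{(S2)} parameters, so the two bias terms are together $O(\sqrt{\eps})$.

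For the entropic stability term, since $\rho$, $\mu$, $\nu$ all have support in $B(0;R)$, \cref{cor:stab_1} gives
$$\|T_\eps^\mu - T_\eps^\nu\|_{L^2(\rho)} \leq \Bigl(1 + \frac{2R^2}{\eps}\Bigr) W_2(\mu,\nu).$$
Collecting the three pieces yields an inequality of the form
$$\|T_0^\mu - T_0^\nu\|_{L^2(\rho)} \leq A\sqrt{\eps} + W_2(\mu,\nu) + \frac{2R^2}{\eps}\,W_2(\mu,\nu),$$
where $A$ depends only on the parameters in \textbf{(S1)} and \textbf{(S2)}. The remaining step is to balance the first and third terms: setting $\sqrt{\eps} \asymp W_2(\mu,\nu)/\eps$ gives $\eps \asymp W_2^{2/3}(\mu,\nu)$, and both terms then become multiples of $W_2^{1/3}(\mu,\nu)$. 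Because $\mu$ and $\nu$ are supported in $B(0;R)$, we have $W_2(\mu,\nu) \leq 2R$, so the prescribed choice of $\eps$ is bounded (keeping the exponential factor in \cref{prop:bias_main} under control), and the stray middle $W_2(\mu,\nu)$ term is absorbed into $W_2^{1/3}(\mu,\nu)$ since $W_2^{2/3}(\mu,\nu) \leq (2R)^{2/3}$. This yields the claimed $W_2^{1/3}$ rate.

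The main (and essentially only) obstacle here is not in the argument itself but in the quality of the inputs: the $1/3$ exponent arises from balancing $\sqrt{\eps}$ with $\eps^{-1}$, and this is only possible thanks to the polynomial $\eps^{-1}$ dependence furnished by \cref{cor:stab_1}. Using the earlier bound of \cite{carlier2022lipschitz}, which has exponential dependence on $\eps^{-1}$, the same balancing would yield only a logarithmic rate in $W_2$. In effect, the tight entropic stability of \cref{cor:stab_1} together with the semi-discrete bias bound of \cref{prop:bias_main} are the two essential ingredients; the proof of the theorem is then a short optimization in $\eps$.
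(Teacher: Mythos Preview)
The proposal is correct and follows essentially the same approach as the paper: the same triangle-inequality decomposition \eqref{eq:bias_variance_decomp}, the same bias bound from \cref{prop:bias_main}, the same entropic stability bound from \cref{cor:stab_1}, and the same optimizing choice $\eps \asymp W_2^{2/3}(\mu,\nu)$. The only cosmetic difference is that the paper handles large $W_2$ by a case split at $W_2(\mu,\nu)=1$, whereas you uniformly bound $\eps$ via $W_2(\mu,\nu)\leq 2R$; both are fine.
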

\begin{proof}
First, we note that if $W_2(\mu,\nu)\geq 1$, then $ \| T_0^\mu - T_0^\nu\|_{L^2(\rho)}\leq 2R\leq 2RW_2(\mu,\nu)^{1/3}$, so the only case of interest is when $W_2(\mu,\nu) \leq 1$.
 Then, using the decomposition \eqref{eq:bias_variance_decomp} with \cref{cor:stab_1} and two applications of \eqref{eq:bias_main}, we obtain
\begin{align*}
    \| T_0^\mu - T_0^\nu\|_{L^2(\rho)}^2 &\leq 4\max\{C_0(\mu) e^{C_1(\mu)\eps^\alpha},C_0(\nu)e^{C_1(\nu)\eps^\alpha}\}\eps + (2 + 4R^4/\eps^{2})W_2^2(\mu,\nu)\,.
\end{align*}
Choosing $\eps= W_2^{2/3}(\mu,\nu)\leq 1$, we obtain our desired rate with a prefactor scaling like $C_0e^{C_1}+1$, where we choose the worse constant arising from the bias terms between $\mu$ and $\nu$. 
\end{proof}

\begin{remark}
Closest to this result is that of \cite{bansil2022quantitative}: when $\mu$ and $\nu$ are supported on the \emph{same} atoms, the following bound holds\footnote{This bound is not explicitly written in the paper but it can be extracted from their Theorem 1.3.}
\begin{align}\label{eq:tv_bound}
     \| T_0^{\mu} - T_0^{\nu}\|_{L^2(\rho)}^2 \leq (J-1) \text{diam}(\Omega)^2 \tv{\mu}{\nu}\,,
\end{align}
where $\tv{\cdot}{\cdot}$ is the total variation distance, and $J$ is the number of atoms in the support of $\mu$ and $\nu$.
\Cref{thm:stability_semidiscrete} implies meaningful bounds in some situations in which~\eqref{eq:tv_bound} fails to do so.
For example, consider the simple setting where $\rho = \text{Unif}(B(0,1))$ and $\mu_\theta = \tfrac12(\delta_{e_\theta} + \delta_{-e_\theta})$ with $e_\theta = (\cos(\theta),\sin(\theta))$ for $ 0 < \theta \ll \pi/2$. It is easy to verify that $W_2(\mu_0,\mu_\theta) \asymp \theta$, so \cref{thm:stability_semidiscrete} gives $\| T_0^{\mu} - T_0^{\nu}\|_{L^2(\rho)} \lesssim \theta^{1/3}$. On the other hand, since $\tv{\mu_0}{\mu_\theta} = 1$, the bound~\eqref{eq:tv_bound} is vacuous.
\end{remark}

\section{Conclusion}
In this work, we provide a tight characterization of the stability of entropic Brenier maps under variations of the target measure. Our bounds do not require any of the measures to have densities, and are strengthened when the entropic Brenier potentials are smooth. 
As an application, we used our entropic bounds combined with existing bounds on the bias of semi-discrete entropic Brenier maps to prove improved stability estimates for semi-discrete transport maps when the discrete measures and smooth source measure satisfy certain regularity assumptions.

\section*{Acknowledgements}
AAP thanks Sinho Chewi for helpful discussions. JNW is supported by the Sloan Research Fellowship and NSF grant DMS-2210583. AAP thanks NSF grant DMS-1922658 and Meta AI Research for financial support.

\appendix

\section{Proof of the bias term}\label{sec: biases}
Recall that our target measures are discrete measures of the form
\begin{align*}
    \mu= \sum_{j=1}^J \mu_j \delta_{y_j}\,.
\end{align*}
and that we write the Laguerre cells as $L_i$ for $i\in \{1,\dots,J\}$.

We  require the following definitions, which we borrow from \citet{altschuler2021asymptotics}. For $x \in L_i$ and any other $j \in \{1,\dots,J\}$, we write
\begin{align*}
    \Delta_{ij}(x) \defeq 2(\langle x, y_i - y_j\rangle - \psi_0^\mu(y_i) + \psi_0^\mu(y_j))\,,
\end{align*}
and $H_{ij}(t) = \{x\in L_i:\ \Delta_{ij}(x)=t\}$, which represents the trace on $L_i$ of the hyperplane spanned by the boundary between $L_i$ and $L_j$, shifted by $t$ (should the two cells have non-empty intersection). Moreover, we have the following co-area formula: for every nonnegative measurable function $f:\R\to {\R_+}$,
\begin{equation*}
    \int_{L_i} f(\Delta_{ij}(x)) \rho(x) \dd x = \frac{1}{2\|y_i-y_j\|} \int_0^\infty f(t) h_{ij}(t) \dd t,
\end{equation*}
where 
\begin{align}\label{eq:defhij}
    h_{ij}(t) =\int_{H_{ij}(t)} \rho(x) \dd \cH_{d-1}(x)\,,
\end{align}
and $\cH_{d-1}$ is the $(d-1)$-dimensional Hausdorff measure.

\begin{proof}[Proof of \cref{prop:bias_main}]
Let $i \in \{1,\ldots,J\}$ and let $x \in L_i$. For $j\in \{1,\dots,J\}$ other than $i$, we have the upper bound
\begin{align*}
    \pi_\eps^\mu(y_j|x) &= \mu_i e^{(\langle x,y_j\rangle -\phi_\eps^\mu(x)- \psi_\eps^\mu(y_j))/\eps} \\
    &=\frac{\mu_i e^{(\langle x,y_j\rangle - \psi_\eps^\mu(y_j))/\eps}}{\sum_{k=1}^J \mu_k e^{(\langle x,y_k\rangle - \psi_\eps^\mu(y_k))/\eps} } \\
    &\leq \frac{\mu_j e^{(\langle x,y_j \rangle - \psi_\eps^\mu(y_j))/\eps} }{\mu_i e^{(\langle x,y_i \rangle - \psi_\eps^\mu(y_i))/\eps} + \mu_j e^{(\langle x,y_j \rangle - \psi_\eps^\mu(y_j))/\eps}}\,.
\end{align*}
Adding and subtracting appropriate factors of $\psi_0^\mu(y_i)$ and $\psi_0^\mu(y_j)$, we obtain
\begin{align*}
    \pi_\eps^\mu(y_j|x) &\leq e^{2\|\psi_0^\mu - \psi_\eps^\mu\|_\infty/\eps} \frac{\mu_j e^{(\langle x,y_j \rangle - \psi_0^\mu(y_j))/\eps} }{\mu_i e^{(\langle x,y_i \rangle - \psi_0^\mu(y_i))/\eps} + \mu_j e^{(\langle x,y_j \rangle - \psi_0^\mu(y_j))/\eps}}  \\
    &= e^{2\|\psi^\mu_0 - \psi^\mu_\eps\|_\infty/\eps} \Bigl( 1 + \tfrac{\mu_i}{\mu_j}e^{\Delta_{ij}(x)/2\eps} \Bigr)^{-1}\,,
\end{align*}
By an application of Jensen's inequality, we have
\begin{align*}
    \|T_\eps^\mu(x) - y_i\|^2 &\leq \sum_{j=1}^J \pi_\eps^\mu(y_j|x)\|y_i - y_j\|^2 \\
    &\leq e^{2\|\psi^\mu_0 - \psi^\mu_\eps\|_\infty/\eps} \sum_{j=1}^J \|y_i - y_j\|^2 \Bigl( 1 + \tfrac{\mu_i}{\mu_j}e^{\Delta_{ij}(x)/2\eps} \Bigr)^{-1}\,,
\end{align*}
so integrating against $\rho$ (partitioned into the $J$ Laguerre cells) yields
\begin{align*}
    \|T_\eps^\mu - T_0^\mu\|^2_{L^2(\rho)} &\leq e^{2\|\psi^\mu_0 - \psi^\mu_\eps\|_\infty/\eps} \sum_{i,j} \|y_i - y_j\|^2 \int_{L_i} \Bigl( 1 + \tfrac{\mu_i}{\mu_j}e^{\Delta_{ij}(x)/2\eps} \Bigr)^{-1} \dd \rho(x) \\
    &= e^{2\|\psi^\mu_0 - \psi^\mu_\eps\|_\infty/\eps} \sum_{i,j} \|y_i - y_j\|/2 \int_{0}^\infty h_{ij}(t)\Bigl( 1 + \tfrac{\mu_i}{\mu_j}e^{t/2\eps} \Bigr)^{-1} \dd t  \\
    &= e^{2\|\psi^\mu_0 - \psi^\mu_\eps\|_\infty/\eps}\eps \sum_{i,j} \|y_i - y_j\|/2 \int_{0}^\infty h_{ij}(u\eps)\Bigl( 1 + \tfrac{\mu_i}{\mu_j}e^{u/2} \Bigr)^{-1} \dd u\,,
\end{align*}
where the second line follows from the definition of the co-area formula, and the last line is a change of variables $u = t/\eps$. This gives \eqref{eq:biasbound_prelim}.

With the additional assumptions \textbf{(S1)} and \textbf{(S2)}, we can use Corollary 3.2 of \citet{delalande2021nearly}, which tells us that
\begin{align}
    \eps^{-1}\|\psi_0^\mu - \psi_\eps^\mu\|_\infty \leq C_1 \eps^{\alpha}\,,
\end{align}
where the underlying constant depends on $d, R, J, \mu_{\min}, \min_{i\neq j}\|y_i - y_j\|, \rho_{\min}, \rho_{\max}$, and on the maximum angle formed by three non aligned points among the atoms $\{y_j\}_{j=1}^J$. This gives 
and upper bound of
\begin{align*}
    \|T_\eps^\mu - T_0^\mu\|^2_{L^2(\rho)} \leq e^{C_1\eps^\alpha} \Bigl(\sum_{i,j} \|y_i - y_j\|/2 \int_{0}^\infty h_{ij}(u\eps)\Bigl( 1 + \tfrac{\mu_i}{\mu_j}e^{u/2} \Bigr)^{-1} \dd u)\eps \,.
\end{align*}
Since $\|y_i-y_j\| \leq 2R$, the masses $\mu_i,\mu_j$ are larger than $\mu_{\min}$, and $h_{ij}(u \eps)$ is bounded under our assumptions on $\rho$, the proof is concluded. 
\end{proof}

\bibliography{main}

\end{document}